\newcommand*{\definiere}{\mathrel{\mathop:}=}
\newcommand{\cyclic}{\mathop{\kern0.9ex{{+}\kern-2.10ex\raise-0.20
      ex\hbox{\Large\hbox{$\circlearrowright$}}}}\limits}
\newcommand{\acts}{\mbox{ \raisebox{0.26ex}{\tiny{$\bullet$}} }}
\def\N{\ifmmode{\mathbb N}\else{$\mathbb N$}\fi}
\def\Z{\ifmmode{\mathbb Z}\else{$\mathbb Z$}\fi}
\def\Q{\ifmmode{\mathbb Q}\else{$\mathbb Q$}\fi}
\def\R{\ifmmode{\mathbb R}\else{$\mathbb R$}\fi}
\def\C{\ifmmode{\mathbb C}\else{$\mathbb C$}\fi}
\def\K{\ifmmode{\mathbb K}\else{$\mathbb K$}\fi}
\def\P{\ifmmode{\mathbb P}\else{$\mathbb P$}\fi}
\def\g{\ifmmode{\mathfrak g}\else {$\mathfrak g$}\fi}
\def\h{\ifmmode{\mathfrak h}\else {$\mathfrak h$}\fi}
\def\a{\ifmmode{\mathfrak a}\else {$\mathfrak a$}\fi}
\def\k{\ifmmode{\mathfrak k}\else {$\mathfrak k$}\fi}
\def\p{\ifmmode{\mathfrak p}\else {$\mathfrak p$}\fi}
\def\b{\ifmmode{\mathfrak b}\else {$\mathfrak b$}\fi}
\def\n{\ifmmode{\mathfrak n}\else {$\mathfrak n$}\fi}
\def\m{\ifmmode{\mathfrak m}\else {$\mathfrak m$}\fi}
\def\t{\ifmmode{\mathfrak t}\else {$\mathfrak t$}\fi}
\def\O{\ifmmode{\mathcal{O}}\else {$\mathcal{O}$}\fi}
\def\W{\ifmmode{\mathcal{W}}\else {$\mathcal{W}$}\fi}
\def\id{{\rm id}}
\def\hq{/\hspace{-0.12cm}/}
\def\kleinematrix#1,#2,#3,#4,{\begin{pmatrix}#1 & #2 \\ #3 & #4
  \end{pmatrix}}
\DeclareMathOperator{\Ad}{Ad}
\DeclareMathOperator{\Lie}{Lie}
\newtheorem{thm}{Theorem}[section]
\newtheorem{prop}[thm]{Proposition}
\newtheorem{Defi}[thm]{Definition}
\newtheorem{lemma}[thm]{Lemma}
\newtheorem{cor}[thm]{Corollary}
\newtheorem{Exs}[thm]{Examples}
\newtheorem{Ex}[thm]{Example}
\newtheorem{Rems}[thm]{Remarks}
\newtheorem{Rem}[thm]{Remark}
\newtheorem{propdef}[thm]{Proposition and Definition}
\newenvironment{rem}   {\begin{Rem}\em}{\end{Rem}}
\newenvironment{defi}  {\begin{Defi}\em}{\end{Defi}}
\newenvironment{ex}  {\begin{Ex}\em}{\end{Ex}}
\begin{document}
\author{Daniel Greb\thanks{The first author is supported by a Promotionsstipendium of the
  Studienstiftung des deutschen Volkes and by SFB/TR 12 of the DFG.} \and Peter Heinzner\thanks{The
  second author is partly supported by SFB/TR 12 of the DFG.}}

\title{K\"ahlerian Reduction in Steps}
\date{}

\maketitle
\begin{abstract}
We study Hamiltonian actions of compact Lie groups $K$ on  K\"ahler manifolds which extend to a
holomorphic action of the complexified group $K^\C$. For a closed normal subgroup $L$ of $K$ we show
that the K\"ahlerian reduction with respect to $L$ is a stratified Hamiltonian K\"ahler
$K^\C/L^\C$-space whose K\"ahlerian reduction with respect to $K/L$ is naturally isomorphic to the
K\"ahlerian reduction of the original manifold with respect to $K$.
\end{abstract}

\section{Introduction}
Reduction of variables for physical systems with symmetries is a
fundamental concept in classical Hamiltonian dynamics. It is based on Noether's principle that every $1$-parameter group of symmetries of a physical system corresponds to a constant of motion. The mathematical formalisation is known as the Marsden-Weinstein-Reduction or \emph{symplectic
reduction}. Consider a symplectic manifold $X$ with a smooth symplectic form $\omega$ and a smooth action of a Lie group $K$ on $X$ by $\omega$-isometries. Assume that there exists a smooth map $\mu$ from $X$ into the dual space $\mathfrak{k}^*$ of the Lie algebra $\mathfrak{k}$ of $K$ such that
\begin{itemize}
 \item[a)] for every $\xi \in \mathfrak{k}$, the function $\mu^\xi: X \to \R$, $\mu^\xi(x) = \mu(x)(\xi)$, fulfills $d\mu^\xi = \imath_{\xi_X}\omega$, where $\xi_X$ denotes the vector field on $X$ induced by the action of $K$ on $X$ and $\imath_{\xi_X}$ denotes contraction with respect to $\xi_X$,
\item[b)] the map $\mu:X \to \mathfrak{k}^*$ is equivariant with respect to the action of $K$ on $X$ and the coadjoint representation $\Ad^*$ of $K$ on $\mathfrak{k}^*$, i.e.\ for all $x\in X$ and for all $\k \in K$ we have $\mu(k \acts x) = \Ad^*(k)(\mu(x))$.
\end{itemize}
The map $\mu: X \rightarrow \mathfrak{k}^*$ is called a ($K$-equivariant) \emph{momentum map} and the action of $K$ on $X$ is called \emph{Hamiltonian}.

The motion of a classical particle is given by a Hamiltonian function $H$. More precisely, let $H$ be a smooth function on $X$. The time evolution of the underlying physical system is described by the flow of the vector field $V_H$ associated to the Hamiltonian $H$ via the equation $dH = \imath_{V_H}\omega$. A Hamiltonian system with symmetries is a system $(X,\omega, K, \mu, H)$ where $H$ is invariant with respect to the $K$-action. In this case it follows from a) that 
\[d\mu^\xi (V_H) = \omega (\xi_X, V_H) = -dH(\xi_X) = 0 \]
holds for all $\xi \in \mathfrak{k}$. This implies Noether's principle in the following geometric formulation: every component $\mu^\xi$ of the momentum map is a constant of motion for the system described by $H$. 

The previous considerations imply that level sets of $\mu$ are
invariant under the flow of $V_H$. In many cases questions can be
organised so that $\mu^{-1}(0)$ is the momentum fibre of interest. By b), the group $K$ acts on the level set $\mu^{-1}(0)$. Let us first consider this action on an infinitesimal level. If we fix $x_0 \in \mathcal{M} \definiere \mu^{-1}(0)$, it follows from a) that $\text{ker} (d\mu(x_0)) = (\mathfrak{k}\acts x_0)^{\perp_\omega}$, where $\mathfrak{k}\acts x_0 = \{\xi_X(x_0) \,|\, \xi \in \mathfrak{k}\}$ and $^{\perp_\omega}$ denotes perpendicular with respect to $\omega$. The optimal situation appears if $\mu$ has maximal rank at $x_0$. In this case, $\mathcal{M}$ is smooth at $x_0$ and $T_{x_0}\mathcal{M}=\text{ker} (d\mu(x_0)) = (\mathfrak{k}\acts x_0)^{\perp_\omega}$ holds. It follows that $\mathcal{M}$ is a coisotropic $K$-stable submanifold of $X$ and the symplectic form $\omega_{x_0}$ on $T_{x_0}\mathcal{M}$ induces a symplectic form $\tilde \omega_{x_0}$ on $T_{x_0}\mathcal{M}/T_{x_0}\mathcal{M}^{\perp_\omega} = T_{x_0}\mathcal{M}/\mathfrak{k}\acts x_0$. These observations imply that once the space $\mathcal{M}/K$ is smooth, it will be a symplectic manifold. This is the content of the \emph{Marsden-Weinstein-Theorem} (see \cite{SympRed}):

\emph{If $K$ acts freely and properly on $\mathcal{M}$, the quotient $\mathcal{M}/K$ is a symplectic manifold whose symplectic form $\tilde \omega$ is characterised via the equation}
\[ \pi^* \tilde \omega = i^*_{\mathcal{M}}\omega.\]
Here, $\pi: \mathcal{M} \rightarrow \mathcal{M}/K$ denotes the quotient map and $i_{\mathcal{M}}: \mathcal{M} \rightarrow X$ is the inclusion.
Furthermore, the restriction of the $K$-invariant Hamiltonian $H$ to $\mathcal{M}$ induces a smooth function $\tilde H$ on $\mathcal{M}/K$. The Hamiltonian system on $(\mathcal{M}/K, \tilde \omega)$ associated to $\tilde H$ captures the essential (symmetry-independent) properties of the original $K$-invariant system that was given by $H$.

The Marsden-Weinstein construction is natural in the sense that it can
be done in steps. This means that for a normal closed subgroup $L$ of
$K$, the restricted momentum map $\mu_L: X \rightarrow \mathfrak{l}^*$
is $K$-equivariant, the induced $K$-action on $\mu^{-1}_L(0)/L$ is
Hamiltonian with momentum map $\tilde \mu$ induced by $\mu$ and the
symplectic reduction $\tilde \mu^{-1}(0)/K$ is symplectomorphic to
$\mathcal{M}/K$.

Removing the restrictive regularity assumptions of \cite{SympRed}, it
is proven in \cite{StratifiedSymplectic} that symplectic reduction can
be carried out for general group actions of compact Lie groups yielding stratified symplectic quotient spaces $\mathcal{M}/K$, i.e.\ stratified spaces where all strata are symplectic manifolds. The paper \cite{Extensionofsymplectic} proposed an approach to this singular symplectic reduction based on embedding symplectic manifolds into K\"ahler manifolds and a K\"ahler reduction theory for K\"ahler manifolds. Roughly speaking, $\mathcal{M}/K$ is realised as a locally semialgebraic subspace of a K\"ahler space $Q$ and the symplectic structure on $\mathcal{M}/K$ is given by restriction. Inspecting the proofs one sees that the construction of \cite{Extensionofsymplectic} is compatible with reduction in steps.

Our interest here is to study the problem of reduction in steps in a K\"ahlerian context using techniques related to the complex geometry and invariant theory for the complexification $K^\C$ of a compact Lie group $K$.

Holomorphic actions of compact groups $K$ on complex spaces $X$ very often
extend to holomorphic actions of the complexification $K^\C$,
at least in the sense that there exists an open $K$-equivariant
embedding $X \hookrightarrow X^c$ of $X$ into a holomorphic $K^\C$-space
$X^c$ (see \cite{HeinznerIannuzzi}). In this note, we consider the case where $K^\C$ already acts on
$X$. More precisely, we consider K\"ahler manifolds $X$ with a Hamiltonian action of a compact Lie
group $K$ that extends to a holomorphic action of the complexification
$K^\C$. In this setup, an extensive quotient theory has been
developed. See Section \ref{groupsonKaehler} for a survey of the results used in this note. Due to the action of the complex group
$K^\C$, it is possible to introduce a notion of semistability. The reduced space $\mathcal{M}/K$ carries a complex structure
given by holomorphic invariant theory for the action of $K^\C$ on the set of semistable points. The reduced symplectic structure is
compatible with this complex structure, i.e. $\mathcal{M}/K$ is a
K\"ahler space. Since the quotient $\mathcal{M}/K$ in general will be singular, the K\"ahler structure is locally given by continuous strictly plurisubharmonic functions that are smooth along the natural
stratification of $\mathcal{M}/K$ given by the orbits of the action of $K$ on
$\mathcal{M}$. We emphasize at this point that both the complex
structure and the K\"ahler structure are defined globally, i.e.\ across
the strata. We call $\mathcal{M}/K$ the \emph{K\"ahlerian reduction} of
$X$ by $K$. This reduction theory works more generally for stratified
K\"ahlerian complex
spaces, however, in order to reduce technical difficulties and not to
obscure the main principles at work, we will restrict our attention to actions on complex manifolds. Nevertheless, we also have to consider complex spaces which appear as quotients by normal subgroups of $K^\C$.

Using the approach of \cite{Extensionofsymplectic} and the quotient theory for complex-reductive group actions, we show that K\"ahlerian reduction can be done in
steps. More precisely, we prove that the K\"ahlerian reduction of $X$
by a normal subgroup $L$ of $K$ is a stratified Hamiltonian K\"ahler $K^\C$-space
and that its K\"ahlerian reduction is isomorphic to the reduction of
$X$ by $K$ (see Theorem \ref{main}). This shows that reduction in steps respects the K\"ahler
geometry. It also exhibits the class of K\"ahlerian stratified spaces
as natural for a K\"ahlerian reduction theory. Furthermore, we look carefully at the stratifications obtained on the various quotient spaces.
\subsection{Reductive group actions on K\"ahler spaces}\label{groupsonKaehler}
As we have noted above, symplectic reduction of K\"ahler manifolds
yields spaces that are endowed with a complex structure. This is
due to a close relation between the quotient theory of a compact Lie
group $K$ and the quotient theory for its complexification $K^\C$
which we will now explain.

In this paper a complex space refers to a reduced complex space with countable topology. If $G$ is a
Lie group, then a \emph{complex $G$-space $Z$} is a complex space with a real-analytic action $G
\times Z \to Z$ which for fixed $g \in G$ is holomorphic. For a complex Lie group $G$ a
\emph{holomorphic $G$-space $Z$} is a complex $G$-space such that the action $G \times Z \to Z$ is
holomorphic.

We note that given a
compact real Lie group $K$, there exists a complex Lie group $K^\C$
containing $K$ as a closed subgroup with the following universal
property: given a Lie homomorphism $\varphi: K \to H$ into a complex
Lie group $H$, there exists a holomorphic Lie homomorphism $\varphi^\C:
K^\C \to H$ extending $\varphi$. 

If not only $K$ but also $K^\C$ acts holomorphically on a manifold $X$, it is natural
to relate the quotient theory of $K$ to the quotient theory of
$K^\C$. Due to the
existence of non-closed $K^\C$-orbits, in contrast to actions of $K$, actions of $K^\C$
will in general not give rise to reasonable orbit spaces. However, as we will see, it is often
possible to find an open subset $U$ of $X$ and a complex space $Y$
that parametrises closed $K^\C$-orbits in $U$. More precisely, we call a complex space $Y$ an
\emph{analytic Hilbert quotient} of $U$ by
the action of $K^\C$ if there exists a $K^\C$-invariant Stein holomorphic map
$\pi: U \rightarrow Y$ with $(\pi_*\mathcal{O}_U)^{K^\C} =
\mathcal{O}_Y$. Here, a \emph{Stein map} is a map such that inverse
images of Stein subsets are again Stein. It can be shown (see \cite{ReductionOfHamiltonianSpaces}, for example) that $Y$ is the quotient of $U$ by the
equivalence relation 
\[x \sim y \quad \quad \text{ if and only if }\quad \quad \overline{K^\C \acts x} \cap  \overline{K^\C
  \acts y} \neq \emptyset.
\]

Analytic Hilbert quotients are universal with respect to
$K^\C$-invariant holomorphic maps, i.e.\ given a $K^\C$-invariant
holomorphic map $\varphi: U \rightarrow Z$ into a complex space $Z$,
there exists a uniquely determined holomorphic map $\bar \varphi: Y
\rightarrow Z$ such that $\varphi = \bar \varphi \circ \pi$.  It follows that an analytic Hilbert quotient of $U$ by $K^\C$ is unique
up to biholomorphism once it exists. We denote it by $U\hq K^\C$. 

The theory of analytic Hilbert quotients is interwoven with the
K\"ahlerian quotient theory as follows (see
\cite{ReductionOfHamiltonianSpaces}): let $K$ be a compact Lie
group with Lie algebra $\mathfrak{k}$, $X$ a holomorphic $K^\C$-manifold. Assume that the action of $K$
is Hamiltonian with respect to a $K$-invariant K\"ahler form on $X$ with $K$-equivariant momentum map $\mu: X \to \mathfrak{k}^*$. In this situation, we call $X$ a \emph{Hamiltonian K\"ahler $K^\C$-manifold}. Let $\mathcal{M}= \mu^{-1}(0)$ and define
\[X(\mathcal{M}) \definiere \{x \in X \,|\, \overline{K^\C\acts x} \cap
\mathcal{M} \neq \emptyset \}. \]A point $x\in X(\mathcal{M})$ is
called \emph{semistable}. The set $X(\mathcal{M})$ is open in $X$ and an analytic Hilbert quotient $Q$ of
$X(\mathcal{M})$ by $K^\C$ exists. Each fibre of the quotient map $\pi: X(\mathcal{M}) \to Q$ contains a unique
closed $K^\C$-orbit which is the unique orbit of minimal dimension in
that fibre. This closed $K^\C$-orbit intersects $\mathcal{M}$ in a
unique $K$-orbit, $K\acts x$. The isotropy group $(K^\C)_x$ of points
$x \in \mathcal{M}$ is
complex-reductive and equal to the complexification of $K_x$. The analytic Hilbert quotient $X(\mathcal{M})\hq K^\C$ is related to the K\"ahlerian
reduction $\mathcal{M}/K$ by the following fundamental commutative diagram
\begin{equation}\label{fundamental}
\begin{xymatrix}{
\mathcal{M} \ar@{^{(}->}[r]^{\iota}\ar[d]_{\pi_K}& X(\mathcal{M})\ar[d]^{\pi}  \\
\mathcal{M}/K  \ar[r]_{\simeq}^{\bar \iota}   & Q.}
\end{xymatrix}
\end{equation}
Here, the induced map $\bar \iota$ is a homeomorphism. Hence, the
symplectic  reduction $\mathcal{M}/K$ has a complex structure induced
via the homeomorphism $\bar \iota$. The inverse of $\bar \iota$ is induced via a retraction
$\psi: X(\mathcal{M}) \to \mathcal{M}$ that is related to the stratification of
$X$ via the gradient flow of the norm square of the momentum map (see
\cite{NeemanTopology}, \cite{KirwanCohomology}, and \cite{GerrySurvey}). 

Using this two-sided picture of the quotient it can be shown using the techniques of
\cite{Extensionofsymplectic} that $\mathcal{M}/K$ carries a natural
K\"ahlerian structure that is smooth along the \emph{orbit type
  stratification}.
\subsection{Stratifying holomorphic $G$-spaces}\label{stratifications}
Invariant stratifications are a powerful
tool in the study of group actions and their quotient spaces (see
\cite{LunaSlice} and \cite{GerryHomotopies}). We will recall the
definitions and basic properties of these stratifications. 

\begin{defi}
A \emph{complex stratification} of a complex space $X$ is a countable,
locally finite covering of $X$ by disjoint subspaces (the so called \emph{strata}) $\mathbb{S} =
(S_\gamma)_{\gamma \in \Gamma}$ with the following properties
\begin{enumerate}
\item each stratum $S_\gamma$ is a
locally closed submanifold of $X$ that is Zariski-open in its
closure,
\item the boundary $\partial S_\gamma = \bar S_\gamma
\setminus S_\gamma$ of each stratum $S_\gamma$ is a union of strata of lower dimension.
\end{enumerate}
\end{defi}
\begin{ex}
The singular set $X_{sing}$ of a complex space $X$ is a closed complex subspace of
smaller dimension. Iterating this procedure, i.e.\ considering the
singular set of $X_{sing}$, we obtain a natural
stratification on $X$. If a Lie group $G$ acts holomorphically on $X$, this stratification is $G$-invariant.
\end{ex}
We now consider stratifications related to group actions.
Let $G$ be a complex-reductive Lie group. Let $X$ be a holomorphic $G$-manifold such that the analytic Hilbert
quotient $\pi_G: X \rightarrow X\hq G$ exists. Let $p$ be a point in
$X\hq G$. The fibre $\pi_G^{-1}(p)$ over $p$ contains a unique closed
orbit. We denote this orbit by $C(p)$. We say that $p$ is of \emph{$G$-orbit-type $(G_1)$} if the stabilizer
of one (and hence any point) in $C(p)$ is conjugate to $G_1$ in $G$.

The next result follows from the holomorphic slice theorem (see \cite{HeinznerGIT}):
\begin{propdef}
Let $X$ be a holomorphic $G$-manifold such that the analytic Hilbert quotient $\pi_G: X
\rightarrow X\hq G$ exists. Then, each connected
component of the set of points of orbit type $(G_1)$ is a locally closed
manifold and the corresponding decomposition of $X\hq G$ is a stratification of $X$ which we call the \emph{orbit type stratification of $X\hq G$}.
\end{propdef}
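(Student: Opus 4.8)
The plan is to reduce the assertion to the structure of a \emph{linear} analytic Hilbert quotient by means of the holomorphic slice theorem, and then to transport the resulting decomposition back to $X\hq G$. Fix $p\in X\hq G$ and let $C(p)=G\acts x_0$ be the unique closed orbit in the fibre $\pi_G^{-1}(p)$. Since this orbit is closed, its stabiliser $H\definiere G_{x_0}$ is complex-reductive. The holomorphic slice theorem of \cite{HeinznerGIT} then supplies the normal representation $V\definiere T_{x_0}X/T_{x_0}(G\acts x_0)$ of $H$, an $H$-stable open neighbourhood $\Omega$ of $0\in V$, and a $G$-equivariant biholomorphism of the associated bundle $G\times_H\Omega$ onto a $\pi_G$-saturated open neighbourhood of $C(p)$ in $X$. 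Passing to analytic Hilbert quotients, which are compatible with the bundle construction, I obtain a biholomorphism of an open neighbourhood of $p$ in $X\hq G$ onto an open subset of $V\hq H$ about the image of $0$. The decisive point is that this chart respects orbit types: a point $v$ of the slice has closed $H$-orbit if and only if the corresponding $G$-orbit is closed, and in that case its stabiliser in $G$ coincides with its stabiliser $H_v$ in $H$. Hence the $G$-orbit-type decomposition of $X\hq G$ near $p$ is carried isomorphically onto the $H$-orbit-type decomposition of $V\hq H$ near the base point.

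It therefore suffices to prove the statement for a linear quotient $V\hq H$, with $H$ complex-reductive acting linearly on the finite-dimensional $V$. Here I would invoke the holomorphic analogue of Luna's stratification theorem: fixing a reductive subgroup $H_1\subseteq H$, the set of points of $V\hq H$ of orbit type $(H_1)$ is a locally closed smooth submanifold; only finitely many orbit types occur; each stratum is Zariski-open in its closure; and the closure of a stratum is the union of that stratum with strata of strictly smaller dimension. Smoothness is obtained by slicing once more, so that the stratum of type $(H_1)$ is, up to the action of the group $N_H(H_1)/H_1$, modelled on the principal part of the fixed-point space $V^{H_1}$; the frontier condition follows because passing to a point of the boundary strictly enlarges the stabiliser of the closed orbit and hence lowers the dimension of the stratum.

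Finally I would patch the local pictures together. Each property asserted in the statement --- that every connected component of a fixed orbit-type set is a locally closed manifold, that the decomposition is locally finite, that each stratum is Zariski-open in its closure, and that the frontier condition holds --- is local on $X\hq G$ and has just been verified in every slice chart. Since orbit type is intrinsic and the slice model is canonical up to the choice of $x_0$ and of the $H$-isomorphism on the normal space, the locally defined pieces assemble into a well-defined global decomposition whose strata are the connected components of the orbit-type sets; countability of the index set follows from the countable topology of $X\hq G$. Thus the orbit-type decomposition is a complex stratification in the sense of the definition given above.

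I expect the main obstacle to be concentrated in the linear model and to be twofold. First, one must check that the slice isomorphism genuinely matches $G$-orbit types with $H$-orbit types, so that the locally defined strata are intrinsic and patch across overlapping charts. Second, and more seriously, one must establish the fine geometry of the Luna strata in $V\hq H$ --- their smoothness, and above all the frontier condition together with Zariski-openness in the closure --- since it is precisely these properties that encode the compatibility of the stabiliser stratification with the complex-analytic structure of the quotient; everything else is formal once the slice theorem is granted.
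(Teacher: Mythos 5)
Your overall route is the paper's route: the paper proves this statement by nothing more than an appeal to the holomorphic slice theorem of \cite{HeinznerGIT}, and your reduction to the model $G\times_H\Omega$ with $H=G_{x_0}$, followed by a Luna-type analysis of the model quotient and a local-to-global patching argument, is precisely the intended expansion of that citation. One simplification is available to you: since $H$ is complex-reductive and acts linearly on $V$, the model quotient $V\hq H$ is an affine algebraic quotient, so Luna's algebraic stratification theorem applies verbatim; no ``holomorphic analogue'' needs to be developed, and the only genuinely holomorphic input is the slice theorem itself.

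There is, however, a genuine gap at what you call the decisive point. The stabiliser identity $G_{[e,v]}=H_v$ and the equivalence of closedness of $G\acts[e,v]$ and $H\acts v$ are correct, but they do \emph{not} imply that the chart carries the $G$-orbit-type decomposition of $X\hq G$ isomorphically onto the $H$-orbit-type decomposition of $V\hq H$: orbit types are \emph{conjugacy classes}, and two slice points $v,w$ with closed $H$-orbits may have stabilisers $H_v,H_w$ that are conjugate in $G$ but not in $H$. Such points lie in the same $G$-orbit-type subset of $X\hq G$ but in different $H$-strata of $V\hq H$, so a $G$-type set pulls back only to a finite union of $H$-strata, and before concluding that its connected components are locally closed manifolds satisfying the frontier condition one must rule out that these pieces accumulate on one another. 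The repair is exactly the kind of argument the paper uses elsewhere (proofs of Lemma \ref{invariance} and Proposition \ref{stratificationscoincide}): suppose the $H$-stratum of type $(H_1)$ met the closure of the $H$-stratum of type $(H_2)$, where $H_1$ and $H_2$ are conjugate in $G$ but not in $H$; Luna's frontier condition then gives $hH_2h^{-1}\subseteq H_1$ for some $h\in H$, while $G$-conjugacy makes $hH_2h^{-1}$ a closed subgroup of $H_1$ isomorphic to $H_1$ as a topological group; since these stabilisers of closed orbits are complex-reductive and so have finitely many connected components, Lemma \ref{subgroups} forces $hH_2h^{-1}=H_1$, contradicting the assumption that $H_1$ and $H_2$ are not $H$-conjugate. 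Hence $H$-strata that fuse under $G$-conjugacy are mutually separated, the connected components of a $G$-type set coincide with connected components of $H$-strata, and your patching argument then goes through as written.
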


We obtain a related $G$-invariant complex stratification of $X$ by stratifying the
preimage $\pi_G^{-1}(S_\gamma)$ of each orbit type stratum $S_\gamma
\subset X\hq G$ as a complex space.

Over a stratum $S_\gamma$, the structure of $X$ and of the quotient
$\pi_G: X \rightarrow X\hq G$ is particularly simple. More precisely,
let $S_\gamma$ be a stratum of $X\hq G$ and let $(G_0)$ be the
conjugacy class of isotropy groups corresponding to $S_\gamma$. Let us
assume for the moment that $X\hq G = S_\gamma$. Then, the slice theorem implies
that each point $q \in X\hq G$ has an open neighbourhood $U$
such that $\pi_G^{-1}(U)$ is $G$-equivariantly biholomorphic to $G
\times_{G_0}Z$, where $Z$ is a locally closed $G_0$-stable Stein submanifold of $X$. The union of the
closed orbits in $\pi_G^{-1}(U)$ is equal to $G
\times_{G_0}Z^{G_0} \cong G/G_0 \times Z^{G_0}$. Noticing that $U
\cong  (G\times_{G_0}Z)\hq G \cong Z\hq G_0 \cong Z^{G_0}$, we see
that the set of closed orbits in $\pi_G^{-1}(S_\gamma)$ is a smooth
fibre bundle over $S_\gamma$ with typical fibre $G/G_0$ and structure
group $G$.

If $X\hq G$ is irreducible, there exists a Zariski-open and dense stratum $S_{princ}$ in
$X\hq G$, called the \emph{principal stratum}. It corresponds to the
minimal conjugacy class $(G_0)$ of isotropy groups of closed orbits, i.e.\  if $x$ is any point
$X$ with closed $G$-orbit, $G_0$ is conjugate in $G$ to a subgroup of $G_x$.

In the Hamiltonian setup there is a stratification related to the
action of $K$ on the momentum zero fibre: let $X$ be a Hamiltonian K\"ahler $K^\C$-space. Assume that $X = X(\mathcal{M})$. Decompose the quotient $\mathcal{M}/K$ by orbit types for the action of $K$ on $\mathcal{M}$. It has been shown in a purely symplectic setup in \cite{StratifiedSymplectic} that this defines a stratification of $\mathcal{M}/K$.
Since $\mathcal{M}/K$ is homeomorphic to $X\hq K^\C$ (see Diagram \eqref{fundamental}) this 
defines a second decomposition of $X\hq K^\C$. However,
it can be shown that the two constructions yield the same result: given a stratum
$S$ corresponding to $K$-orbit type $(K_1)$, it coincides with one of
the connected components of the set of points in $X\hq K^\C$ with
orbit type $(K_1^\C)$ (see \cite{SjamaarSlices}).
\subsection{K\"ahler reduction}\label{KaehlerReduction}
In this section we recall the basic definitions necesarry for K\"ahlerian reduction theory. 
Due to the presence of singularities in the spaces that we will
consider, K\"ahler structures are defined in terms of strictly
plurisubharmonic functions.
\begin{defi}
Let $Z$ be a complex space. A continuous function $\rho: Z \to
\R$ is called \emph{plurisubharmonic}, if
for every holomorphic map $\varphi$ from the unit disc $D$ in $\C$ to $Z$, the pullback $\varphi^* (\rho)$ is subharmonic on
$D$, i.\ e.\ for each $0 < r < 1$  the \emph{mean
value inequality} \[\varphi^*(\rho) (0) \leq \frac{1}{2\pi} \int_0^{2\pi}\varphi^*(\rho)(r e^{i \theta}) \;d\theta \]holds.

A \emph{perturbation} of a continuous function $\rho: Z \to \R $ at a point $x \in Z$ is a function $\rho + f$, where $f$ is smooth and defined in
some neighbourhood $U$ of $x$. The function $\rho$ is said to be
\emph{strictly plurisubharmonic} if for every perturbation $\rho + f$ there exist $\varepsilon > 0 $ and a perhaps smaller neighbourhood
$V$ of $x$ such that $\rho + \varepsilon f$ is plurisubharmonic on $V$.
\end{defi}
\begin{rem}
If $Z$ is a complex manifold and $\rho: Z \rightarrow \R$ is smooth,
then $\rho$ is strictly plurisubharmonic if and only if its Levi form
$\frac{i}{2}\partial \bar \partial \rho$ is positive definite, i.e. if
it defines a K\"ahler form.
\end{rem}

\begin{defi}
A \emph{K\"ahler structure} on a complex space $Z$ is
given by an open cover $(U_j)$ of $Z$ and a family of strictly
plurisubharmonic functions $\rho _j: U_j
\rightarrow \R$ such that the differences $\rho_j -
\rho_k$ are \emph{pluriharmonic} on $U_{jk}\definiere U_j \cap U_k$ in the sense that there exists a
holomorphic function $f_{jk} \in \mathcal{O}(U_{jk})$ with $\rho_j -
\rho_k = Re(f_{jk})$. Two K\"ahler structures $(U_j, \rho_j)$ and $(\tilde{U}_k, \tilde{\rho}_k)$
are considered equal if there exists a common refinement $(V_l)$ of
$(U_j)$ and $(\tilde{U}_k)$ such that $(\rho_l - \tilde \rho_l)|_{V_l}$ is pluriharmonic for
every $l$.
\end{defi}
Again, we remark that the definition made above coincides with the
usual definition of a K\"ahler form on a complex manifold if all the
$\rho_j$'s are assumed to be smooth. For more information on strictly plurisubharmonic functions we refer to \cite{Varouchas2} and \cite{FornaessNarasimhan}.

We have seen that analytic Hilbert quotients of manifolds have a natural stratification by
orbit types. Taking into account this additional structure, we make
the following definition.
\begin{defi}
A complex space $Z$ is called a \emph{stratified K\"ahler space} if
there exists a complex stratification $\mathbb{S}=(S_\gamma)_{\gamma
  \in \Gamma}$ on $Z$ which is finer than the stratification of $Z$ as
a complex space and there exists a K\"ahler structure $\omega = (U_\alpha , \rho_\alpha)_{\alpha \in I}$
  on $Z$ such that $\rho_\alpha |_{S_\gamma \cap U_\alpha}$ is smooth.
\end{defi}
The following theorem is a special case of results proven in \cite{Extensionofsymplectic} and \cite{ReductionOfHamiltonianSpaces}:
\begin{thm}\label{reduction}
Let $K$ be a compact Lie group and $G= K^\C$ its complexification. Let $X$ be a Hamiltonian K\"ahler $K^\C$-manifold with $X=X(\mathcal{M})$. We denote the quotient map by $\pi_G: X \rightarrow X\hq G$. Let $\mathbb{S}^{X\hq G}_G$ be the orbit type stratification of $X\hq G$
defined above. Then there exists a K\"ahler structure
$\tilde{\omega}=(U_\alpha, \tilde{\rho}_\alpha)_{\alpha\in I}$ on $X\hq G$ with the following properties:
\begin{enumerate}
 \item the tripel $(X, \mathbb{S}^{X\hq G}_G, \tilde{\omega})$ is a stratified
K\"ahler space,
\item there exist smooth functions $\rho_\alpha: \pi_H^{-1}(U_\alpha) \rightarrow \R$ with $\omega|_{\pi_H^{-1}(U_\alpha)}= 2i \partial\bar\partial
(\rho_\alpha)$ such that the following equality holds: $\pi_G^*(\tilde \rho_\alpha )|_{\mathcal{M}\cap \pi_G^{-1}(U_\alpha)}=\rho_\alpha|_{\mathcal{M}\cap \pi_G^{-1}(U_\alpha)}$.
\end{enumerate}
\end{thm}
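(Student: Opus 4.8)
The plan is to build the K\"ahler structure on $Q \definiere X\hq G$ by pushing down local potentials of $\omega$ from $X$ and to read off all required properties from the two-sided description of the quotient in Diagram \eqref{fundamental}. Since $\pi_G$ is a Stein map, I would cover $Q$ by open sets $U_\alpha$ whose preimages $\pi_G^{-1}(U_\alpha)$ are Stein and small enough to carry smooth potentials $\rho_\alpha$ with $\omega = 2i\partial\bar\partial\rho_\alpha$; averaging over the compact group $K$ (which fixes $\omega$ and preserves the $\pi_G$-saturated set $\pi_G^{-1}(U_\alpha)$) I may assume each $\rho_\alpha$ is $K$-invariant. The momentum map is then recovered from $\rho_\alpha$ through the derivative of $\rho_\alpha$ in the directions $J(\mathfrak{k}\acts x)$, so that $\mathcal{M}\cap\pi_G^{-1}(U_\alpha)$ is precisely the locus where $\rho_\alpha$ is critical along the $K^\C$-orbit directions. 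This is the point that ties the choice of potential to the given $\mu$ and hence to $\mathcal{M}$.

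Next I would descend. Because $\rho_\alpha$ is $K$-invariant and $\mathcal{M}/K \cong Q$ via $\bar\iota$, the restriction $\rho_\alpha|_{\mathcal{M}\cap\pi_G^{-1}(U_\alpha)}$ factors through $\pi_G$ to a continuous function $\tilde\rho_\alpha$ on $U_\alpha$; equivalently $\tilde\rho_\alpha = \rho_\alpha\circ\psi$ along the retraction $\psi$ onto $\mathcal{M}$. This is exactly property (2). For the cocycle condition I would note that on overlaps $\rho_\alpha - \rho_\beta$ is $K$-invariant and pluriharmonic on $\pi_G^{-1}(U_{\alpha\beta})$, hence of the form $\mathrm{Re}(h)$ with $h$ holomorphic and, after adjusting by an imaginary constant, $K$-invariant; a $K$-invariant holomorphic function is $K^\C$-invariant and therefore descends by the universal property of the analytic Hilbert quotient to some $\bar h \in \mathcal{O}(U_{\alpha\beta})$. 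Then $\tilde\rho_\alpha - \tilde\rho_\beta = \mathrm{Re}(\bar h)$ is pluriharmonic, so $(U_\alpha, \tilde\rho_\alpha)$ is a genuine K\"ahler structure on $Q$ in the sense of the definition above.

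The main obstacle is to prove that each $\tilde\rho_\alpha$ is strictly plurisubharmonic on the possibly singular space $Q$, and this is exactly where the results of \cite{Extensionofsymplectic} and \cite{ReductionOfHamiltonianSpaces} enter. I would localise by the holomorphic slice theorem: near a point of $Q$ with closed orbit $G\acts x_0$ and reductive isotropy $(K^\C)_{x_0} = (K_{x_0})^\C$, the saturation is $G\times_{(K^\C)_{x_0}} Z$ for a Stein slice $Z$, reducing the claim to $Z\hq (K^\C)_{x_0}\cong U_\alpha$ and a linear slice representation. On such a model the push-down $\tilde\rho_\alpha$ is the fibrewise minimum of $\rho_\alpha$ over the $G$-orbits, which by convexity of the Kempf--Ness function $t \mapsto \rho_\alpha(\exp(it\xi)\acts x)$ is attained exactly on $\mathcal{M}$; the estimates of the cited papers then show that this minimum is strictly plurisubharmonic. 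I expect the delicate technical core to be the verification of strict plurisubharmonicity at the singular points of $Q$, i.e.\ along the lower orbit-type strata where the isotropy jumps and the fibrewise minimum degenerates.

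Finally, for smoothness along the stratification: over an orbit-type stratum $S_\gamma$ the set of closed orbits meeting $\mathcal{M}$ is, by the local product structure recalled in Section \ref{stratifications}, a smooth fibre bundle over $S_\gamma$ with fibre $K/K_0$, so $S_\gamma$ is a genuine Marsden--Weinstein quotient of a manifold and $\tilde\rho_\alpha|_{S_\gamma}$, being the descent of the smooth $K$-invariant function $\rho_\alpha$, is smooth. Together with strict plurisubharmonicity this shows that $(X\hq G, \mathbb{S}^{X\hq G}_G, \tilde\omega)$ is a stratified K\"ahler space, giving property (1) and completing the proof.
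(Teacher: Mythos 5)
Your architecture is the same as the paper's (local invariant potentials, restrict to $\mathcal{M}$, descend via $\mathcal{M}/K \cong X\hq G$, pluriharmonic cocycle on overlaps, strict plurisubharmonicity stratum by stratum plus extension across strata), and your cocycle argument is in fact more explicit than what the paper records. But there is a genuine gap at your very first step, and it is the step everything else hangs on. Averaging over $K$ produces a $K$-invariant potential $\rho_\alpha$ for $\omega$, but it does \emph{not} produce the identity $\mu = \mu_{\rho_\alpha}$ of equation \eqref{murho}: two $K$-invariant potentials for the same form differ by a $K$-invariant pluriharmonic function, and such a function can shift the associated momentum map by a nonzero $\Ad^*(K)$-fixed constant. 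Concretely, take $X = \C^*\times\C$ with $S^1$ acting by $t\acts(z,w) = (tz,w)$, the flat K\"ahler form, and the momentum map $\mu(z,w) = |z|^2 - 1$, so that $\mathcal{M} = \{|z|=1\}\times\C$ and $X = X(\mathcal{M})$. The invariant potential $\rho(z,w) = |z|^2 + |w|^2$ satisfies $\mu_\rho(z,w) = |z|^2 \neq \mu(z,w)$; its restriction to a $G$-orbit $\C^*\times\{w\}$ has \emph{no} critical points and attains no minimum (the infimum $|w|^2$ is approached as $z \to 0$, far from $\mathcal{M}$), so $\mathcal{M}$ is not the fibrewise critical locus, and your two descriptions of $\tilde\rho_\alpha$ --- restriction of $\rho_\alpha$ to $\mathcal{M}$ versus fibrewise minimum of $\rho_\alpha$ --- give different functions ($1+|w|^2$ versus $|w|^2$). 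The adapted potential here is $|z|^2 - \log|z|^2 + |w|^2$. Producing, on a $\pi_G$-saturated neighbourhood, a potential that is adapted to the \emph{given} $\mu$ and is an exhaustion along the fibres is exactly Proposition \ref{organisation}, the key technical result of \cite{ReductionOfHamiltonianSpaces}; it requires correcting the naive potential by pluriharmonic terms and cannot be obtained by averaging. Your criticality claim, the attainment of fibrewise minima, and the Kempf--Ness convexity argument all silently use it.

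The second soft spot is that the analytic core --- strict plurisubharmonicity of the descended function on the singular quotient --- is in the end only attributed to ``the estimates of the cited papers''. Since the theorem \emph{is} a result of those papers, this is close to circular, and the heuristic you offer does not substitute for a proof: an infimum of a family of plurisubharmonic functions is in general not plurisubharmonic, so the fibrewise-minimum picture alone proves nothing. The actual mechanism (which your slice-theorem localisation could feed into) is the one the paper outlines: over a stratum, build a smooth section $\sigma$ of $\pi_G$ through $\mathcal{M}$ whose differential at $z_0$ is a complex-linear isomorphism onto $T_{z_0}(K^\C \acts z_0)^\perp$; for each smooth perturbation $\tilde f$ show, using criticality of $\rho_\epsilon$ along the orbit (adaptedness again) and $d\eta(\pi(z_0)) = 0$, that the Levi form of $\tilde\rho_\epsilon = \rho_\epsilon\circ\sigma$ at $\pi(z_0)$ equals that of $\rho_\epsilon$ at $z_0$, giving plurisubharmonicity on each stratum; then invoke Grauert--Remmert \cite{Extensionofspshfunctions} to pass from continuity plus stratum-wise plurisubharmonicity to plurisubharmonicity on all of $X\hq G$. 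Without this computation (or an equivalent, e.g.\ a Kiselman-type minimum principle), property (1) remains unproved in your write-up.
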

We recall the construction of the reduced K\"ahler structure.
With the notation of the previous theorem, the key technical result for the construction can be stated as follows:
\begin{prop}\label{organisation}
Let $x \in \mathcal{M}$. Then, there exists a
$\pi_G$-saturated neighbourhood $U$ of $x$ in $X$, such that the K\"ahler structure
$\omega$ is given by a smooth strictly plurisubharmonic function $\rho:
U \rightarrow \R$. The function $\rho$ is
an exhaustion along every fibre of $\pi_G$ that is contained in $U$. Furthermore, the restriction of the momentum map
$\mu$ to $U$ fulfills
\begin{equation}\label{murho}
\mu^\xi(x) = \mu_\rho^\xi(x) \definiere \left.\frac{d}{dt}\right|_{t=0}\rho(\exp(it\xi)\acts x)\quad \forall \xi \in
\mathfrak{k}, \forall x\in U.
\end{equation}
The set $\mathcal{M}\cap U$ coincides with the set of critical points
for the restriction of $\rho$ to fibres of the quotient map $\pi_G$.
\end{prop}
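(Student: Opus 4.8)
The plan is to build $\rho$ as a $K$-invariant local potential on a saturated Stein set, to read off the momentum map from it, and then to analyse the critical set via convexity. First I would produce the neighbourhood: choose a Stein open neighbourhood $V$ of $\pi_G(x)$ in $X\hq G$ and set $U\definiere\pi_G^{-1}(V)$. Since $\pi_G$ is a Stein map, $U$ is a Stein, $\pi_G$-saturated, $K^\C$-stable (hence $K$-stable) open submanifold of $X$. Using the holomorphic slice theorem at $x$ — whose $K^\C$-orbit is closed and whose isotropy $H\definiere(K^\C)_x$ is reductive — I may shrink $V$ so that $U$ is $K^\C$-equivariantly biholomorphic to $K^\C\times_H Z$ with $Z$ an $H$-stable contractible Stein slice.

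On the Stein manifold $U$ a smooth global potential $\rho_0$ with $2i\partial\bar\partial\rho_0=\omega|_U$ exists as soon as $[\omega|_U]\in H^2(U;\R)$ vanishes. I would verify this by a homotopy argument: $U\cong K^\C\times_H Z$ retracts onto the closed orbit $K^\C/H$, which retracts onto the compact $K$-orbit $K\acts x$, and the latter is isotropic, since for $x\in\mathcal{M}$ the equivariance of $\mu$ gives $d\mu(x)(\xi_X)=-\ad^*(\xi)\mu(x)=0$, whence $\mathfrak{k}\acts x\subseteq\ker d\mu(x)=(\mathfrak{k}\acts x)^{\perp_\omega}$ and $\omega$ vanishes on $K\acts x$. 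Thus $[\omega|_U]=0$ and $\rho_0$ exists. Averaging, $\rho\definiere\int_K\rho_0(k\acts\,\cdot\,)\,dk$ is smooth and $K$-invariant and satisfies $2i\partial\bar\partial\rho=\int_K k^*\omega\,dk=\omega|_U$ by $K$-invariance of $\omega$; since its Levi form equals $\omega$ it is strictly plurisubharmonic, so $\rho$ represents the K\"ahler structure on $U$.

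For the momentum identity I would use that the infinitesimal $K^\C$-action satisfies $\tfrac{d}{dt}\big|_{t=0}\exp(it\xi)\acts x=J\xi_X(x)$, so that $\mu_\rho^\xi=d\rho\circ J(\xi_X)$. A Cartan-calculus computation using $L_{\xi_X}\rho=0$ (from $K$-invariance) and $dd^c\rho=\omega$ yields $d\mu_\rho^\xi=\imath_{\xi_X}\omega$, and $K$-equivariance of $\mu_\rho$ is inherited from the invariance of $\rho$; hence $\mu_\rho$ is a momentum map for $(\omega,K)$ on the connected set $U$, so $\mu^\xi-\mu_\rho^\xi$ is a constant $c\in(\mathfrak{k}^*)^K$. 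This constant is pinned to $0$ by the normalisation $\mathcal{M}=\mu^{-1}(0)$: adding to $\rho$ a suitable $K$-invariant pluriharmonic function shifts $\mu_\rho$ by precisely an element of $(\mathfrak{k}^*)^K$ without changing the Levi form, and this is exactly the adjustment realising \eqref{murho}. Thus $\mu^\xi=\mu_\rho^\xi$ on all of $U$.

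Finally, the exhaustion and critical-set claims follow from convexity: on a fibre $F=\pi_G^{-1}(p)\subset U$, the $K$-invariance of $\rho$ makes $t\mapsto\rho(\exp(it\xi)\acts y)$ depend only on $t$, and subharmonicity of $\rho$ along the complex curves $\tau\mapsto\exp(\tau\xi)\acts y$ forces this to be strictly convex; along the non-compact directions $\exp(i\mathfrak{k})$ of the unique closed orbit in $F$ this makes $\rho|_F$ bounded below and proper, i.e.\ an exhaustion, with minimum on the closed orbit. A point $y\in F$ is critical for $\rho|_F$ iff $d\rho_y$ annihilates $T_yF$; as $K$-invariance already kills $\mathfrak{k}\acts y$, the condition reduces to vanishing on $J\mathfrak{k}\acts y$, i.e.\ $\mu_\rho^\xi(y)=0$ for all $\xi$, i.e.\ $\mu(y)=0$, so the critical set is exactly $\mathcal{M}\cap U$. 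I expect the main obstacle to be the construction step — securing a genuine global $K$-invariant potential on a saturated neighbourhood — which is where the vanishing of $[\omega|_U]$ through the isotropy of $K\acts x$ is essential and where one must ensure that shrinking $V$ keeps $U$ simultaneously saturated, Stein, and modelled on $K^\C\times_H Z$ with contractible slice; the second delicate point is the passage from $\mu_\rho$ being \emph{a} momentum map to its equality with the prescribed $\mu$.
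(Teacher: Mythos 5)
The paper itself gives no proof of Proposition \ref{organisation}: it is recalled as known from \cite{ReductionOfHamiltonianSpaces} and \cite{Extensionofsymplectic}. Your architecture (slice theorem $\Rightarrow$ saturated Stein $U$; isotropy of $K\acts x$ $\Rightarrow$ $[\omega|_U]=0$ $\Rightarrow$ potential on Stein $U$; averaging; Cartan calculus for $\mu_\rho$) is indeed the standard skeleton from that literature. But the two steps you yourself flag as ``delicate'' are not merely delicate: as written they contain genuine gaps, and they are exactly where the analytic content of the proposition sits.

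First, the identity \eqref{murho}. After averaging you correctly get $\mu-\mu_{\rho_0}=c\in(\mathfrak{k}^*)^K$, but your resolution inverts the true implication. Adding a $K$-invariant pluriharmonic $h$ shifts $\mu_{\rho_0}$ by the \emph{particular} constant $\mu_h\in(\mathfrak{k}^*)^K$; it is false that every element of $(\mathfrak{k}^*)^K$ arises this way. For example, if $x\in\mathcal{M}$ is a $K^\C$-fixed point, then every such $h$ satisfies $\mu_h^\xi(x)=\frac{d}{dt}\big|_{t=0}h(\exp(it\xi)\acts x)=0$, hence $\mu_h\equiv 0$ on the connected saturated neighbourhood, so the realisable shifts form the zero subspace even though $(\mathfrak{k}^*)^K$ may be large. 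What makes the correction possible is a constraint you never establish: since $(K^\C)_x=(K_x)^\C$, one has $\exp(it\xi)\acts x=x$ for $\xi\in\mathfrak{k}_x$, whence $\mu_{\rho_0}^\xi(x)=0=\mu^\xi(x)$ and therefore $c$ annihilates $\mathfrak{k}_x+[\mathfrak{k},\mathfrak{k}]$; and for such $c$ one must actually \emph{construct} $h$, e.g.\ as a combination of functions $\log|\chi|$ for characters $\chi$ of the central torus, pulled back via the holomorphic projection $U\cong K^\C\times_{(K_x)^\C}Z\to K^\C/(K_x)^\C$ and descending precisely because $c|_{\mathfrak{k}_x}=0$. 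Already the simplest example shows the correction is unavoidable and nontrivial: $X=\C^*$, $K=S^1$, $\mu=-|z|^2+2a$, $\rho_0=\tfrac12|z|^2$ gives $c=2a\neq 0$, corrected only by $h=-2a\log|z|$. Without this construction, \eqref{murho} is unproven.

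Second, the exhaustion along fibres. Your convexity argument concerns only curves $s\mapsto\exp(is\xi)\acts y$, and combined with \eqref{murho} it can indeed be pushed (via a compactness argument over directions in $\mathfrak{k}_x^\perp$ and Mostow's decomposition of $K^\C/(K_x)^\C$) to give properness of $\rho$ on the \emph{closed} orbit. But the fibre $F=\pi_G^{-1}(\pi_G(x))$ is a union of infinitely many $G$-orbits, all non-closed except $K^\C\acts x$, and on the non-closed orbits $\rho$ has no critical points at all; nothing in your argument controls the sublevel sets $\{\rho|_F\le c\}$ along these orbits, and properness on the closed orbit does not imply properness on $F$. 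This fibrewise exhaustion is the analytic core of the Kempf--Ness/semistability picture and is established in the cited sources by building the potential explicitly on the slice model (where $F\cong K^\C\times_{(K_x)^\C}(\mathcal{N}\cap B)$ with $\mathcal{N}$ the nullcone of the slice representation), not by the soft argument you give. Note also that your last claim inherits this gap: the inclusion $\mathcal{M}\cap F\subseteq\{\text{critical points of }\rho|_F\}$, read as minimality of $\rho|_F$ on $\mathcal{M}\cap F$, rests on the exhaustion property.
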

\begin{rem}
If a momentum map $\mu$ fulfills equation \eqref{murho} for some function $\rho$, we say that \emph{$\mu$ is associated to $\rho$} and we write $\mu = \mu_\rho$.
\end{rem}
The K\"ahler structure on the quotient $X(\mathcal{M})\hq G$ is constructed as follows: every point $y \in X\hq G$ has a neighbourhood
of the form $U\hq G$ which has the properties of Proposition \ref{organisation}. The restriction of $\rho$
to $\mathcal{M}\cap U$ induces a continuous function $\tilde{\rho}$
on $U\hq G$. We will see that it is strictly plurisubharmonic and smooth along the strata of $\mathbb{S}^{X\hq G}_G$. Let $S$ be a stratum. Let $Y$ be the set of closed $G$-orbits in $\pi_G^{-1}(S)$. This is smooth and contains $\mathcal{M}\cap
\pi^{-1}_G(S)$ as a smooth submanifold. We know that $\omega$ and hence
$\rho$ are smooth along $Y$. The quotient map $\pi_K: \mathcal{M} \cap \pi^{-1}_G(S) \to
S$ is a smooth submersion, hence $\tilde{\rho}$ is smooth along each stratum $S$. Over $S$, we can assume that $X
= Y = G/\Sigma \times X\hq G$. For $z_0 \in
\mathcal{M}$, we have 
\[T_{z_0}(\mathcal{M}) = T_{z_0}(K\acts z_0) \oplus
T_{z_0}(K^\C\acts z_0)^{\perp},\]
where $\perp$ denotes the perpendicular with respect to the Riemannian
metric associated to $\omega$. This allows us to construct a smooth
section $\sigma: X\hq G \rightarrow G/\Sigma \times X\hq G$ for $\pi$
with image in $\mathcal{M}$ in such a way that the
differential of $\sigma$ at $\pi(z_0)$ is a complex linear
isomorphism from $T_{\pi(z_0)}(X\hq G)$ to $ T_{z_0}(K^\C\acts
z_0)^\perp$. Let $\sigma(w)= (\eta(w), w)$. Let $\tilde f$ be a smooth
function near $\pi (z_0)$. Since $\rho$ is strictly
plurisubharmonic there exists an $\epsilon > 0$ such that
$\rho_\epsilon := \rho + \epsilon (\tilde f \circ \pi)$ is
plurisubharmonic on $X$. By construction, $\tilde
\rho_\epsilon := \tilde \rho + \epsilon \tilde f$ is equal to
$\rho_\epsilon \circ \sigma$. Now use the chain rule, the fact that
$\rho_\epsilon|_{K^\C\acts z}$ is critical at $K^\C\acts z \cap
\mathcal{M}$ and $d\eta(\pi(z_0))=0$ to show that
\[\frac{\partial^2\tilde\rho_\epsilon}{\partial
  w_i\partial\bar w_j}(\pi(z_0))= \frac{\partial^2\rho_\epsilon}{\partial
  w_i\partial\bar w_j}(z_0). \]
It follows from the considerations above that $\tilde \rho_\epsilon$ is plurisubharmonic on
each stratum $S \subset X\hq G$. Since $\tilde\rho_\epsilon$
extends continuously to $X\hq G$, the results of
\cite{Extensionofspshfunctions} imply that $\tilde\rho_\epsilon$ is
plurisubharmonic on $X\hq G$. Hence, $\tilde \rho$ is strictly
  plurisubharmonic.

Covering $X\hq G$ with sets $U_\alpha\hq G$, the
corresponding strictly plurisubharmonic functions $\tilde \rho_\alpha$ fit together to
define a K\"ahler structure on $X\hq G$ with the desired smoothness
properties. We emphasize at this point that the induced K\"ahler structure does not
depend on the choice of the pairs $(U_\alpha, \rho_\alpha)$.
\begin{rem}
Inspecting the proof that we have outlined above, one sees that K\"ahlerian reduction works under the following weaker regularity assumptions: as before, let $K$ be a compact Lie group and $G=K^\C$ its complexification. Let $X$ be a holomorphic K\"ahler $G$-space with analytic Hilbert quotient $\pi_G: X\rightarrow X\hq G$. Let $\mathbb{S}$ be a stratification of $X\hq G$ which is finer than the stratification of $X\hq G$ as a complex space and finer than the decomposition of $X\hq G$ by $G$-orbit types. Assume that the K\"ahler structure of $X$ is smooth along every $G$-orbit in $X$, on a Zariski-open smooth subset $X_{reg}$ of $X$ and along the set of closed orbits in $\pi_G^{-1}(S)$ for each stratum $S$ of $X\hq G$. Assume that the K\"ahler structure is $K$-invariant and that there exists a continuous map $\mu: X \rightarrow \mathfrak{k}^*$ which is a smooth momentum map for the $K$-action on $X_{reg}$ as well as on every $G$-orbit. In this situation, we call $X$ a \emph{stratified Hamiltonian K\"ahler $K^\C$-space}. Assume that $X=X(\mathcal{M})$. Then, the fundamental diagram \eqref{fundamental} holds (in particular, $X\hq G$ is homeomorphic to $\mu^{-1}(0)/K$) and the construction outlined above yields a K\"ahler structure on $X\hq G$ which is smooth along the stratification $\mathbb{S}$.
\end{rem}
The following example shows that even if the quotient is a smooth
manifold, we cannot expect the reduced K\"ahler structure to be
smooth. This illustrates that the reduction procedure is also sensible to
singularities of the map $\mathcal{M} \rightarrow
\mathcal{M}/K$. 
\begin{ex}
Consider $X = \mathbb{C}^2$ with the action of $\C^*= (S^1)^\C$ given by $t \acts
(z,w) = (tz,t^{-1}w)$. The standard K\"ahler form on $\C^2$ can be written
as $\frac{i}{2}\partial \bar \partial \rho$, where $\rho: v \mapsto
\|v\|^2$ denotes the square of the norm function associated to the standard
Hermitean product on $\C^2$. After identification of $Lie
(S^1)^*$ with $\R$, the momentum map associated to $\rho$ is given by $\mu(z,w) = |z|^2 - |w|^2$. It follows that the
momentum zero fibre is singular at the origin. Every point in $\C^2$
is semistable and the analytic Hilbert quotient is realised by $\pi:
\C^2 \rightarrow \C, (z,w) \mapsto zw$. Restriction of $\rho$ to
$\mu^{-1}(0)$ induces the function $\tilde \rho (\tilde{z}) = |\tilde{z}|$ on $\C^2\hq \C^* = \C$. It is continuous strictly
plurisubharmonic and smooth along the orbit type stratification of $\C$.
\end{ex}
\section{Reduction in steps}
From now on we consider the following situation: Let $K$ be a
connected compact
Lie group, $G = K^\C$ its complexification and let $X$ be a connected Hamiltonian
K\"ahler $K^\C$-manifold with $K$-equivariant
momentum map $\mu: X \rightarrow \mathfrak{k}^*$. Let $L$ be a closed normal subgroup of $K$. Then, $L^\C=: H$ is contained in $K^\C$ as a closed normal complex subgroup. The inclusion
$\iota: \mathfrak{l} \rightarrow \mathfrak{k}$ of $\mathfrak{l}= Lie (L)$ into $\mathfrak{k}$ induces an adjoint map $\iota^*:
\mathfrak{k}^* \rightarrow \mathfrak{l}^*$. The composition $\mu_L:
X\rightarrow \mathfrak{l}^*$, $\mu_L \definiere \iota^* \circ \mu$ is a momentum
map for the action of $L$ on $X$. Set $\mathcal{M}_L \definiere
\mu_L^{-1} (0)$. There is a corresponding set of semistable points $X(\mathcal{M}_L) \definiere \{x \in X \,|\, \overline{H \acts x}\cap \mathcal{M}_L \neq \emptyset \}$, and an analytic Hilbert quotient
$\pi_H : X(\mathcal{M}_L) \rightarrow X(\mathcal{M}_L)\hq H$. In the following we will investigate the
relations between this quotient and the quotient $\pi_G: X(\mathcal{M}) \to X(\mathcal{M})\hq G$. The main result we show here is
\begin{thm}[K\"ahlerian reduction in steps]\label{main}
With the notation introduced above, the following holds:
\begin{enumerate}
\item[a)] The analytic Hilbert quotient $Q_L\definiere X(\mathcal{M})\hq H$ exists and is realised as an open subset of
$X(\mathcal{M}_L)\hq H$. There is a holomorphic $G$-action on
$Q_L$ such that the quotient map $\pi_H:
X(\mathcal{M}) \rightarrow Q_L$ is $G$-equivariant.  The analytic Hilbert
quotient $\bar \pi: Q_l \rightarrow Q_L\hq G$ exists. It is naturally biholomorphic to $X(\mathcal{M})\hq G$ and the following diagram commutes
\[\begin{xymatrix}{
   X(\mathcal{M}_L)\ar[d]_{\pi_H} &\ar@{_{(}->}[l] X(\mathcal{M}) \ar[d]_{\pi_H}\ar[r]^{\pi_G}&  X(\mathcal{M})\hq G  \\
   X(\mathcal{M}_L)\hq H & \ar@{_{(}->}[l] Q_L \ar[ru]_{\bar \pi}   & .
}
  \end{xymatrix}
\]
\item[b)]The restriction of the momentum map $\mu$ to $\mathcal{M}_L$ is $L$-invariant and induces a momentum map for the $K$-action on $Q_L$. This makes $Q_L$ into a stratified Hamiltonian K\"ahler $K^\C$-space. The analytic Hilbert quotient $Q_L\hq G$ carries a K\"ahlerian structure induced by the K\"ahlerian structure of $Q_L$. This K\"ahlerian structure coincides with the K\"ahlerian structure obtained by reduction for the quotient $\pi_K:\mathcal{M}  \rightarrow \mathcal{M}/K \simeq X(\mathcal{M})\hq G$.
\item[c)]The $G$-orbit type stratification of $X(\mathcal{M})\hq G$ coincides with the $2$-step stratification which is defined in Section \ref{Kaehlerinsteps}.
\end{enumerate}
\end{thm}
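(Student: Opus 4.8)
The plan is to organise everything around the $K$-invariant splitting $\mathfrak{k}=\mathfrak{l}\oplus\mathfrak{m}$ afforded by a $K$-invariant inner product. Since $L$ is normal, $\mathfrak{l}$ and $\mathfrak{m}$ are ideals, $\Ad^*(L)$ acts trivially on $\mathrm{Ann}(\mathfrak{l})=\ker\iota^*=\mathfrak{m}^*$, and under $\mathfrak{k}^*=\mathfrak{l}^*\oplus\mathfrak{m}^*$ the map $\iota^*$ is the projection; thus $\mu_L$ is the $\mathfrak{l}^*$-component of $\mu$ and $\mathcal{M}=\mu^{-1}(0)\subseteq\mu_L^{-1}(0)=\mathcal{M}_L$. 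The whole theory of Section \ref{groupsonKaehler} applies verbatim to the $L^\C=H$-action on $X$ with momentum map $\mu_L$, giving $\pi_H:X(\mathcal{M}_L)\to X(\mathcal{M}_L)\hq H$ and the $H$-analogue $\mathcal{M}_L/L\cong X(\mathcal{M}_L)\hq H$ of the fundamental diagram \eqref{fundamental}. The first structural input I record is the inclusion $X(\mathcal{M})\subseteq X(\mathcal{M}_L)$, deduced from Proposition \ref{organisation}: for $x\in X(\mathcal{M})$ choose $y\in\overline{G\acts x}\cap\mathcal{M}$ and a $\pi_G$-saturated neighbourhood $U$ of $y$ on which $\rho$ is a smooth strictly plurisubharmonic exhaustion along $\pi_G$-fibres with $\mu=\mu_\rho$. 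The entire fibre $F=\pi_G^{-1}(\pi_G(x))$, hence $\overline{H\acts x}\subseteq\overline{G\acts x}\subseteq F$, lies in $U$; restricting the exhaustion $\rho|_F$ to the closed set $\overline{H\acts x}$ yields a minimiser $z$, at which convexity of $t\mapsto\rho(\exp(it\eta)\acts z)$ forces $\mu_L^\eta(z)=0$ for all $\eta\in\mathfrak{l}$, i.e.\ $z\in\overline{H\acts x}\cap\mathcal{M}_L$.

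For part a) I first show that $X(\mathcal{M})$ is $\pi_H$-saturated in $X(\mathcal{M}_L)$. This is a pure orbit-closure argument: every point of a $\pi_G$-fibre contains the unique closed $G$-orbit of that fibre (which lies in $\mathcal{M}$) in its orbit closure, so $X(\mathcal{M})$ is stable under passing to points of $G$-orbit closures; combined with $\overline{H\acts w}\subseteq\overline{G\acts w}$ this shows $\pi_H(x)=\pi_H(x')$ with $x\in X(\mathcal{M})$ forces $x'\in X(\mathcal{M})$. Hence $Q_L\definiere\pi_H(X(\mathcal{M}))$ is open in $X(\mathcal{M}_L)\hq H$ and the restriction of $\pi_H$ realises it as an analytic Hilbert quotient $X(\mathcal{M})\hq H$. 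Normality of $H$ makes $G$ permute the $H$-fibres, giving the holomorphic $G$-action (effectively a $G/H=(K/L)^\C$-action) for which $\pi_H$ is equivariant. To obtain $Q_L\hq G$ I verify that $Q_L=Q_L(\tilde{\mathcal{M}})$ for the descended momentum map $\tilde\mu$ of part b) with $\tilde{\mathcal{M}}=\tilde\mu^{-1}(0)$, and apply the reduction theory for \emph{stratified Hamiltonian K\"ahler spaces} (the Remark following Proposition \ref{organisation}) to the residual $(K/L)^\C$-action. Finally I identify $Q_L\hq G$ with $X(\mathcal{M})\hq G$ by \emph{quotients in stages}: since $\pi_G$ is $H$-invariant it factors as $\pi_G=\phi\circ\pi_H$; $\phi$ is Stein by cancellation ($\pi_G,\pi_H$ Stein) and $(\phi_*\mathcal{O}_{Q_L})^{G/H}=(\pi_{G*}\mathcal{O})^G=\mathcal{O}_{X(\mathcal{M})\hq G}$ because $G$-invariants are the $G/H$-invariants of the $H$-invariants, so $\phi$ is the analytic Hilbert quotient by $G/H$ and coincides with $\bar\pi$; the stated diagram then commutes by construction.

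For part b) the $L$-invariance of $\mu|_{\mathcal{M}_L}$ is immediate from $\mu(\mathcal{M}_L)\subseteq\mathrm{Ann}(\mathfrak{l})$ and the triviality of $\Ad^*(L)$ there, so $\mu|_{\mathcal{M}_L}$ descends along $\mathcal{M}_L/L\cong X(\mathcal{M}_L)\hq H$ to a continuous $\tilde\mu:Q_L\to\mathfrak{m}^*$. I check that $\tilde\mu$ is associated to the $H$-reduced potential $\tilde\rho$ in the sense of \eqref{murho} by repeating, for $\xi\in\mathfrak{m}$, the computation from the construction of the reduced K\"ahler structure: writing $\tilde\rho=\rho\circ\sigma$ for the reduction section $\sigma(w)=(\eta(w),w)$ into $\mathcal{M}_L$ with $d\eta=0$ at the base point, the same criticality-along-$H$-orbits and $K$-invariance bookkeeping gives the first-order identity $\frac{d}{dt}\big|_{0}\tilde\rho(\exp(it\xi)\acts q)=\mu^\xi(\sigma(q))$, i.e.\ $\tilde\mu=\mu_{\tilde\rho}$. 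Assembling this with the smoothness of the $H$-reduced K\"ahler structure along the orbit-type strata shows $Q_L$ is a stratified Hamiltonian K\"ahler $K^\C$-space. The coincidence of the stepwise and the direct K\"ahler structures on $X(\mathcal{M})\hq G$ then follows from Theorem \ref{reduction}(2): both reduced potentials are characterised, up to pluriharmonic summands, by their pullback to $\mathcal{M}$, and for the stepwise potential $\tilde{\tilde\rho}$ the factorisation $\pi_G=\bar\pi\circ\pi_H$ gives, for $x\in\mathcal{M}\subseteq\mathcal{M}_L$ with $\pi_H(x)\in\tilde{\mathcal{M}}$, the chain $\pi_G^*\tilde{\tilde\rho}(x)=\tilde\rho(\pi_H(x))=\rho(x)$, which is exactly the pullback characterising the direct potential; hence the two K\"ahler structures agree.

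For part c) I match isotropy data through the extension $1\to H\to G\to G/H\to1$: a point of $X(\mathcal{M})\hq G$ is represented by a closed orbit $G\acts y$ with $y\in\mathcal{M}$ and $G_y=(K_y)^\C$, whose image is a closed $(K/L)^\C$-orbit in $Q_L$ lying over a closed $H$-orbit, and the holomorphic slice theorem lets me read the $G$-orbit type off the pair $(H_y,\,(K/L)^\C\text{-orbit type})$, which is precisely the data recorded by the $2$-step stratification of Section \ref{Kaehlerinsteps}. I expect the real difficulty to lie in two places. The first is verifying in full the stratified-Hamiltonian-K\"ahler package for $Q_L$ — in particular that $\tilde\mu$ is a genuine smooth momentum map on the regular locus and along every $(K/L)^\C$-orbit, and that the $H$-reduced K\"ahler structure is smooth along the closed-orbit bundles over each stratum — since only then does the singular reduction theory legitimately apply in the second step; this is where the weight of transporting smoothness across strata sits. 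The second is proving that the $G$-orbit-type and $2$-step decompositions are \emph{equal} rather than merely one refining the other, which requires that the pair $(H_y,(K/L)^\C\text{-type})$ reconstructs the conjugacy class of $(K_y)^\C$ without further splitting.
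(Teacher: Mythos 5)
Your overall architecture parallels the paper's (exhaustion argument, descended $G$-action, quotients in stages, potentials compared via their restrictions to $\mathcal{M}$), but the proposal has a genuine gap at the single most load-bearing step of part a), and parts b) and c) are left unproven exactly where the real content sits.

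The saturation of $X(\mathcal{M})$ with respect to $\pi_H$ (the paper's Lemma \ref{saturated}) cannot be obtained by your ``pure orbit-closure argument'', because the principle you invoke --- that $X(\mathcal{M})$ is stable under passing to points of $G$-orbit closures --- is false, even when closures are taken inside $X(\mathcal{M}_L)$. Concretely, let $K = S^1\times S^1$ act on $X=\C^2$ by $(s,t)\acts (z,w)=(sz,tw)$, let $L$ be the first factor, and normalise the momentum map so that $\mu(z,w)=(|z|^2,\,|w|^2-1)$. Then $\mathcal{M}_L=\{z=0\}$ and $X(\mathcal{M}_L)=\C^2$, while $\mathcal{M}=\{0\}\times S^1$ and $X(\mathcal{M})=\C\times\C^*$. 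The point $(1,1)$ is $G$-semistable, yet $(1,0)\in\overline{G\acts (1,1)}$ is not: the orbit closure of a boundary point need not contain the closed orbit of the original $\pi_G$-fibre, which is precisely where your inference breaks down. (In this example $X(\mathcal{M})$ \emph{is} $\pi_H$-saturated, so your conclusion is true while your argument for it is not.) The correct argument needs the closed-orbit theory, and your own preamble already contains the analytic input: the minimiser $z\in\overline{H\acts x}\cap \mathcal{M}_L$ produced by the exhaustion argument lies in $X(\mathcal{M})$, and since $H$-orbits through $\mathcal{M}_L$ are closed in $X(\mathcal{M}_L)$ (Section \ref{groupsonKaehler}), $H\acts z$ is the unique closed $H$-orbit in $\pi_H^{-1}(\pi_H(x))$; hence any $x'$ in this fibre satisfies $\overline{H\acts x'}\supseteq H\acts z$, so $\overline{G\acts x'}\supseteq \overline{G\acts z}$ meets $\mathcal{M}$. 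This is exactly how the paper proceeds, by establishing $X(\mathcal{M})=\mathcal{S}_H(X(\mathcal{M})\cap\mathcal{M}_L)$.

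For part b), the smoothness of the reduced K\"ahler structure and of $\tilde\mu$ along every $G$-orbit in $X\hq H$ --- which you correctly identify as necessary before singular reduction can be applied in the second step --- rests on the $G$-invariance of the $H$-orbit-type stratification; the paper proves this (Lemma \ref{invariance}) by a principal-stratum argument on $G\acts z$ combined with Lemma \ref{subgroups}, and nothing in your proposal replaces it. Also, your claim that $\Ad^*(L)$ acts trivially on the annihilator of $\mathfrak{l}$ ``since $\mathfrak{l}$ and $\mathfrak{m}$ are ideals'' only treats the identity component $L_0$; for disconnected $L$ the paper needs the additional step that $L/L_0$ is finite and normal in the connected group $K/L_0$, hence central, hence acts trivially on $\Lie(K/L_0)$. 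For part c), the assertion that the pair consisting of the $H$-orbit type and the orbit type of the induced $G$-action on $X\hq H$ reconstructs the $G$-conjugacy class of $G_y$ \emph{is} the statement to be proven, and you explicitly say you do not know how to establish it. The paper's Proposition \ref{stratificationscoincide} does this in two refinement directions, the harder one using the dimension count $\dim \mathfrak{g}_w = \dim(\mathfrak{g}_w+\mathfrak{h})-\dim\mathfrak{h}+\dim(\mathfrak{g}_w\cap\mathfrak{h})$, the analyticity of the set of closed orbits in $\pi_G^{-1}(S)$, the existence of a principal orbit type there, the exact sequence $1\to H_y\to G_y\to G_yH/H\to 1$, and Lemma \ref{subgroups} to upgrade abstract isomorphisms and inclusions of isotropy groups to equalities. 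Without these ingredients, parts b) and c) remain open in your write-up.
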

\subsection{Analytic reduction in steps}\label{complex}
In this section we will prove part a) of Theorem \ref{main}.
\begin{thm}[Analytic reduction in steps]\label{normalquotient}
Let $X$ be a holomorphic $G$-space for the complex-reductive Lie group $G$ and let $H \vartriangleleft G$ be a reductive normal subgroup. Assume that the analytic Hilbert quotient $\pi_G: X \rightarrow X\hq G$ exists. Then, the analytic Hilbert
quotient $\pi_H : X \to X\hq H$ exists and admits a holomorphic $G$-action such that $\pi_H$ is $G$-equivariant. Furthermore, the analytic Hilbert
quotient of $X\hq H$ by $G$ exists and is naturally biholomorphic to $X\hq
G$. If $\bar \pi: X\hq H \rightarrow X\hq G$ denotes the quotient map,
the diagram 
\begin{align}\label{diagram}
\begin{xymatrix}{
X \ar[r]^{\pi_G}\ar[d]_{\pi_H}  & X\hq G \\
  X\hq H  \ar[ur]_{\bar \pi}&
}
\end{xymatrix}
\end{align}
commutes.
\end{thm}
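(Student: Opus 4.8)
The plan is to build $X \hq H$ locally over the existing quotient $X \hq G$, to descend the $G$-action, and then to recognise $X \hq G$ itself (together with an induced map $\bar\pi$) as the analytic Hilbert quotient of $X \hq H$ by $G$. First I would construct $\pi_H$. Cover $X \hq G$ by Stein open subsets $(V_i)$; since $\pi_G$ is a Stein map, each $X_i \definiere \pi_G^{-1}(V_i)$ is a Stein holomorphic $G$-space, so the existence theorem for analytic Hilbert quotients of Stein spaces by reductive groups (see \cite{HeinznerGIT}) provides a quotient $q_i : X_i \to X_i \hq H$, which is again Stein. The combinatorial key for gluing is that every $G$-saturated subset is $H$-saturated: the relation $x \sim_H y \Leftrightarrow \overline{H\acts x}\cap\overline{H \acts y}\neq\emptyset$ refines $x \sim_G y$ because $H \acts x \subseteq G \acts x$. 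Hence each overlap $X_{ij}=\pi_G^{-1}(V_i \cap V_j)$, being $G$-saturated, is $H$-saturated inside both $X_i$ and $X_j$, so its images $q_i(X_{ij})$ and $q_j(X_{ij})$ are open and both realise $X_{ij}\hq H$. Uniqueness of analytic Hilbert quotients then yields canonical transition biholomorphisms satisfying the cocycle condition, and the $X_i \hq H$ glue to a complex space $X \hq H$ with a Stein quotient map $\pi_H$. Because $H$ is normal, left translation by $g \in G$ permutes $H$-orbits ($g\acts(H\acts x)=H\acts(g\acts x)$) and hence $H$-fibres; the universal property of $\pi_H$ produces a unique holomorphic $\bar g$ with $\bar g \circ \pi_H = \pi_H \circ g$, and $g \mapsto \bar g$ is the desired holomorphic $G$-action making $\pi_H$ equivariant (holomorphy of the action map follows from $(G\times X)\hq H = G \times (X\hq H)$, since $(g,x)\mapsto \pi_H(g\acts x)$ is $H$-invariant).

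Next, since $\pi_G$ is in particular $H$-invariant, the universal property of $\pi_H$ gives a unique holomorphic map $\bar\pi : X \hq H \to X \hq G$ with $\bar\pi \circ \pi_H = \pi_G$; this is the asserted commuting diagram, and a one-line check shows $\bar\pi$ is $G$-invariant. Note that $H$ already acts trivially on $X\hq H$ (from $\bar h \circ \pi_H = \pi_H$ and uniqueness), so the action factors through $G/H$. It then remains to verify that $\bar\pi$ satisfies the defining properties of the analytic Hilbert quotient by $G$. For the sheaf condition I would pull back along $\pi_H$: a $G$-invariant $f \in \O_{X\hq H}(\bar\pi^{-1}V)$ gives an $H$- and $G$-invariant function $\pi_H^* f \in \O_X(\pi_G^{-1}V)^G = \pi_G^*\O_{X\hq G}(V)$, and injectivity of $\pi_H^*$ forces $f \in \bar\pi^* \O_{X\hq G}(V)$, whence $(\bar\pi_* \O_{X\hq H})^G = \O_{X\hq G}$. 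For the Stein-map property I would use that for a Stein subset $S \subseteq X \hq G$ the preimage $\pi_G^{-1}(S)$ is Stein (as $\pi_G$ is Stein) and $G$-, hence $H$-saturated, so $\bar\pi^{-1}(S)=\pi_G^{-1}(S)\hq H$ is the quotient of a Stein space by the reductive group $H$ and therefore Stein. Thus $\bar\pi$ is a $G$-invariant Stein map with the correct invariant sheaf, and by uniqueness $(X\hq H)\hq G$ exists and is naturally biholomorphic to $X\hq G$.

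I expect the main obstacle to be the construction and gluing of $X\hq H$ in the first step: one must simultaneously control the local Stein quotients, ensure the overlaps are saturated for the \emph{finer} $H$-relation, and check that the glued map retains the Stein-map property globally. Normality of $H$ enters crucially both here (so that $G$ permutes $H$-fibres) and in the descent of the $G$-action; once $\pi_H$ and $\bar\pi$ are in hand, the identification $(X\hq H)\hq G \cong X\hq G$ is a formal consequence of the universal property and the uniqueness of analytic Hilbert quotients.
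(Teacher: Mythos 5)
Your second and third steps reproduce the paper's proof almost verbatim: the $G$-action on $X\hq H$ obtained by applying the universal property to $(g,x)\mapsto\pi_H(g\acts x)$ (using normality of $H$ and the identification $(G\times X)\hq H = G\times (X\hq H)$), the map $\bar\pi$ obtained by factoring the $H$-invariant map $\pi_G$ through $\pi_H$, the Stein property of $\bar\pi$ checked via the $\pi_H$-saturated Stein sets $\pi_G^{-1}(U)$ and the theorem that quotients of Stein spaces are Stein \cite{HeinznerGIT}, and the sheaf identity $(\bar\pi_*\mathcal{O}_{X\hq H})^G=\mathcal{O}_{X\hq G}$ are exactly the paper's arguments. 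The divergence --- and the gap --- is in your first step. The paper does not construct $X\hq H$ by gluing local quotients: it observes that $\pi_G\colon X\to X\hq G$ is an $H$-invariant Stein map and invokes the existence theorem of \cite{SemistableQuotients}, which states precisely that a holomorphic $H$-space admitting an $H$-invariant Stein map onto a complex space has an analytic Hilbert quotient. Your plan amounts to re-proving that theorem by hand, and the step you explicitly defer, ``check that the glued map retains the Stein-map property globally,'' is exactly its hard analytic core, not a routine verification.

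Concretely, after gluing you must show that $\pi_H^{-1}(S)$ is Stein for \emph{every} Stein open $S\subset X\hq H$. Your construction gives this only for the saturated sets $S=\bar\pi^{-1}(U)$ with $U\subset X\hq G$ Stein, since only then is $\pi_H^{-1}(S)=\pi_G^{-1}(U)$ a set to which \cite{HeinznerGIT} applies. A general Stein open $S$ is not a union of fibres of $\bar\pi$, so all your local data say about $\pi_H^{-1}(S)$ is that it is ``locally Stein over $X\hq G$,'' and that is genuinely insufficient: by Skoda's counterexamples to Serre's problem, even a locally trivial holomorphic fibre bundle with Stein fibres over a Stein base can fail to be Stein, so no soft local-to-global principle yields the Stein-map property. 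Establishing it requires the invariant-theoretic structure of the fibration (orbit convexity, invariant plurisubharmonic exhaustions), which is the actual content of the cited result. Until this is closed, the assertion that $\pi_H\colon X\to X\hq H$ \emph{is} an analytic Hilbert quotient --- the very first claim of the theorem --- is unproved; the downstream uses of the universal property (for the $G$-action and for $\bar\pi$) could be recovered by factoring locally over the charts $X_i\hq H$ and gluing by uniqueness, but the Stein-map property of $\pi_H$ cannot. The efficient repair is to replace your first step by the citation, exactly as the paper does.
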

\begin{proof}
The quotient map $\pi_G: X \rightarrow X\hq G$ is an $H$-invariant Stein map. Hence, the quotient $X\hq H$ exists (see \cite{SemistableQuotients}). Let $\pi_H: X
\rightarrow X\hq H$ be the quotient map. The map
\[\id_G \times \pi_H : G \times X \rightarrow
G \times X\hq H\]is an analytic Hilbert quotient for the
$H$-action on $G\times X$ which is given by the action of $H$ on the second factor. Since $H$ is a normal subgroup of $G$, the map
that is obtained by composition of the action map $G \times
X \rightarrow X$ with the quotient map
$\pi_H$ is an $H$-invariant holomorphic map from $ G \times
X$ to $X\hq H$. By the universal property of analytic Hilbert quotients we obtain a holomorphic
map  $ G \times
X\hq H \to X\hq H$ such that the following
diagram commutes:
\[\begin{xymatrix}{
G \times X \ar[r]\ar[d]_{\id_G \times\pi_H}&  X\ar[d]_{\pi_H}\\
G \times  X\hq H\ar[r] &  X\hq H.
}
\end{xymatrix}
\]
This defines a
holomorphic action of $G$ on $ X\hq H$ such that $\pi_H$
is $G$-equivariant.

The $G$-invariant map $\pi_G: X \to X\hq G$ descends to a $G$-invariant map $\bar \pi: X\hq H\rightarrow X\hq G$. We claim that $\bar \pi$ is Stein. Indeed, let $U \subset X\hq G$ be Stein. Since $\pi_G$ is an analytic Hilbert quotient map, the inverse image $\pi_G^{-1}(U)$ is a $\pi_H$-saturated Stein subset of $X$. Since
analytic Hilbert quotients of Stein spaces are Stein (see
\cite{HeinznerGIT}), $\pi_H(\pi_G^{-1}(U))= \bar\pi^{-1}(U)$ is a Stein open set
in $X\hq H$. Furthermore, using the fact that $\bar \pi$ is induced by
$\pi_G$ and that $\pi_H$ is $G$-equivariant, we see that
$\mathcal{O}_{X\hq G} = \bar \pi _* (\mathcal{O}_{X \hq H})^G$. This shows that the map $\bar \pi: X \hq H \rightarrow X\hq G$ is the analytic Hilbert
quotient of $X\hq H$ by the action of $G$.
\end{proof}
In the situation of Theorem \ref{main}, it follows from the previous theorem that the analytic Hilbert quotient of $X(\mathcal{M})$ by the action of a normal complex-reductive subgroup $H$ of $G$ exists. For our purposes, it is important to relate this quotient to the quotient of $X(\mathcal{M}_L)$ by $H$ and to the momentum geometry of $\mu_L$. For later reference, we make the following
\begin{defi}
 Let $X$ be a holomorphic $G$-space and $A \subset X$. We define $\mathcal{S}_G(A) \definiere \{x \in X \,
|\, \overline{G\acts x}\cap A \neq \emptyset\}$ and call it the
\emph{saturation} of $A$ with respect to $G$. If the space $X$ plays a
role in our considerations, we also write $\mathcal{S}_G^X(A)$.
\end{defi}

\begin{lemma}\label{saturated}
Let $L$ be a compact subgroup of $K$ and $H \definiere L^\C \subset G= K^\C$. Then, the set $X(\mathcal{M})$ is a $\pi_H$-saturated subset of $X(\mathcal{M}_L)$.
\end{lemma}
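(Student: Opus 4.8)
The plan is to reduce the whole statement to a single property of the retraction attached to $\mu_L$, and then to prove that property with the Kempf--Ness function. First I would record the cheap inputs: since $\mu_L = \iota^*\circ\mu$ we have $\mathcal{M}\subseteq\mathcal{M}_L$, and $X(\mathcal{M})$ is open and $G$-invariant (because $\overline{G\acts gx}=\overline{G\acts x}$), hence in particular $H$- and $L$-invariant. The main tool I would use is the Kempf--Ness function $\kappa_x\colon G\to\R$, the primitive of the momentum map along the geodesics $t\mapsto\exp(it\xi)\acts x$, whose defining feature is that $x\in X(\mathcal{M})$ if and only if $\kappa_x$ is bounded from below on $G$. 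Since $\mu_L$ is the $\mathfrak{l}$-component of $\mu$, the corresponding function for the $H$-action is just the restriction $\kappa_x|_H$. Boundedness below on $G$ therefore forces boundedness below on $H$, which gives the inclusion $X(\mathcal{M})\subseteq X(\mathcal{M}_L)$ at once.

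Next I would invoke the $\mu_L$-analogue of diagram \eqref{fundamental}, which furnishes a retraction $\psi_L\colon X(\mathcal{M}_L)\to\mathcal{M}_L$ with $\psi_L(x)$ lying in the closed $H$-orbit contained in $\overline{H\acts x}$, and with the property that $\pi_H(x)=\pi_H(y)$ exactly when $\psi_L(x)$ and $\psi_L(y)$ lie in the same $L$-orbit. Because $X(\mathcal{M})$ is $L$-invariant, $\pi_H$-saturatedness of $X(\mathcal{M})$ is equivalent to membership in $X(\mathcal{M})$ being constant along the fibres of $\pi_H$, and this reduces to the two implications
\[ x\in X(\mathcal{M})\ \Longrightarrow\ \psi_L(x)\in X(\mathcal{M}),\qquad \psi_L(x)\in X(\mathcal{M})\ \Longrightarrow\ x\in X(\mathcal{M}). \]
The second is the easy, orbit-closure direction: writing $m_0=\psi_L(x)\in\overline{H\acts x}\subseteq\overline{G\acts x}$, the closed $G$-orbit in the fibre of $m_0$ meets $\mathcal{M}$ and is contained in $\overline{G\acts m_0}\subseteq\overline{G\acts x}$, so $\overline{G\acts x}\cap\mathcal{M}\neq\emptyset$.

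The first implication is the heart of the matter, and I expect it to be the main obstacle: it asserts that the closed-$H$-orbit representative $m_0=\psi_L(x)\in\mathcal{M}_L$ of a $G$-semistable point $x$ is again $G$-semistable. A direct orbit-closure argument is blocked by circularity, since one cannot use the fibre structure of $\pi_G$ at $m_0$ before knowing $m_0\in X(\mathcal{M})$; and a naive momentum estimate is spoiled by the coupling between the $\mathfrak{l}$-directions and their complement. The way I would force it through is the cocycle identity $\kappa_x(h\,g')=\kappa_x(h)+\kappa_{h^{-1}\acts x}(g')$. Choosing a minimizing sequence $h_n\in H$ for $\kappa_x|_H$ with $h_n^{-1}\acts x\to m_0$ (this is exactly the trajectory defining $\psi_L$), continuity of $y\mapsto\kappa_y(g')$ yields
\[ \kappa_{m_0}(g')=\lim_{n}\bigl(\kappa_x(h_n g')-\kappa_x(h_n)\bigr). \]
Now $\kappa_x$ is bounded below on all of $G$ by some constant $c$, while $\kappa_x(h_n)$ tends to the finite value $\inf_H\kappa_x$ (finite because $x\in X(\mathcal{M}_L)$ by the inclusion already proved); hence the right-hand side is bounded below by $c-\inf_H\kappa_x$ uniformly in $g'$.

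Thus $\kappa_{m_0}$ is bounded below on $G$, i.e.\ $m_0\in X(\mathcal{M})$, which establishes the first implication and closes the reduction. Assembling the three steps, $X(\mathcal{M})$ is an $H$-invariant union of $\pi_H$-fibres contained in $X(\mathcal{M}_L)$, that is, a $\pi_H$-saturated subset, as claimed. I would expect the only genuinely technical point to be the verification that the minimizing sequence for $\kappa_x|_H$ converges to $\psi_L(x)$ and that the Kempf--Ness function enjoys the stated cocycle and semicontinuity properties; both are part of the standard Kempf--Ness/gradient-flow package underlying the retraction recalled in Section \ref{groupsonKaehler}.
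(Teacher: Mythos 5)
Your reduction of the lemma to the two implications about $\psi_L$ is sound, and your proof of the second (orbit-closure) implication is correct. The genuine gap is the tool everything else rests on: the claim that ``$x \in X(\mathcal{M})$ if and only if $\kappa_x$ is bounded from below on $G$'' is not the definition of semistability (which is $\overline{G\acts x}\cap\mathcal{M}\neq\emptyset$), and as a theorem it is \emph{false} in the generality of this paper. The direction you actually use twice --- boundedness below of the Kempf--Ness function implies semistability --- fails for non-compact K\"ahler manifolds. Concretely, let $X=\C^*$ with $G=\C^*$ acting by multiplication, with the K\"ahler structure and momentum map restricted from $\C$, so that $\mu=\mu_\rho$ for the global potential $\rho(z)=|z|^2$. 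Then $\mathcal{M}=\mu^{-1}(0)=\emptyset$, hence $X(\mathcal{M})=\emptyset$, yet for every $z$ the Kempf--Ness function is $\kappa_z(g)=\rho(g\acts z)-\rho(z)\geq -\rho(z)$, bounded below on all of $G$. This invalidates both the step where you deduce $X(\mathcal{M})\subseteq X(\mathcal{M}_L)$ from boundedness of $\kappa_x|_H$, and the conclusion of your ``heart'', where you deduce $m_0\in X(\mathcal{M})$ from boundedness of $\kappa_{m_0}$. (In the projective or compact K\"ahler setting the equivalence is a genuine theorem --- Kempf--Ness, moment-weight inequality --- but it is not available here, and proving it would be far harder than the lemma itself. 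Your remaining technical debt, that the gradient-flow trajectory defining $\psi_L$ is a minimizing sequence for $\kappa_x|_H$, is also a nontrivial piece of that same machinery.)

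What is missing is exactly the input the paper uses instead: by Proposition \ref{organisation}, over a semistable point the K\"ahler structure is given on the whole fibre $F=\pi_G^{-1}(\pi_G(x_0))$ by a potential $\rho$ with $\mu|_F=\mu_\rho$ which is an \emph{exhaustion} of $F$. Exhaustion, not boundedness below, is what turns an infimum into an attained minimum. The paper restricts $\rho$ to the closed $H$-invariant set $C=\overline{H\acts x_0}\cap X(\mathcal{M})\subseteq F$, obtains a minimum point $y_0\in C$, and notes that at a minimum $\mu_L^\xi(y_0)=\left.\frac{d}{dt}\right|_{t=0}\rho(\exp(it\xi)\acts y_0)=0$ for all $\xi\in\mathfrak{l}$, so $y_0\in X(\mathcal{M})\cap\mathcal{M}_L\cap\overline{H\acts x_0}$. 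This one argument simultaneously yields your inclusion $X(\mathcal{M})\subseteq X(\mathcal{M}_L)$ and your first implication (so the retraction $\psi_L$, the cocycle identity and the minimizing-sequence analysis are not needed at all); the equality $X(\mathcal{M})=\mathcal{S}_H(X(\mathcal{M})\cap\mathcal{M}_L)$, and hence saturatedness, then follows from openness and $H$-invariance of $X(\mathcal{M})$, essentially your easy direction. If you insist on a Kempf--Ness formulation, you must replace ``bounded below on $H$'' by ``attains its minimum along $\overline{H\acts x_0}$'', and the only way the present setting gives you that is through the exhaustion property of Proposition \ref{organisation} --- at which point your argument collapses into the paper's.
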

\begin{proof}
Let $x_0
\in X(\mathcal{M})$. By Proposition \ref{organisation} there exists a strictly
plurisubharmonic exhaustion function $\rho$ of the fibre
$F\definiere \pi_G^{-1}(\pi_G(x_0))$ with the property that $\mu|_F$ is
associated to $\rho$.
This implies that $\mathcal{M}\cap F$ is the set where $\rho|_F$
assumes its minimum. The restriction of $\rho$ to $C\definiere \overline{H\acts x_0} \cap X(\mathcal{M}) \subset F$ also is an exhaustion. This implies that $\rho|_C$ attains its minimum at some point $y_0 \in \overline{H\acts
x_0}$. For all $\xi \in \mathfrak{l}$, we have $\mu_L^\xi(y_0)=\left.\frac{d}{dt}\right|_{t=0}\rho(exp(it\xi)\acts
y_0)=0$. Hence, $y_0\in \mathcal{M}_L$ and $x_0 \in X(\mathcal{M}_L)$.

The first part of the proof shows that $X(\mathcal{M}) \subset \mathcal{S}_H(X(\mathcal{M}) \cap \mathcal{M}_L)$. Conversely, let $x \in \mathcal{S}_H(X(\mathcal{M}) \cap \mathcal{M}_L)$. Then, by definition, $\overline{H \acts x} \cap (X(\mathcal{M}) \cap \mathcal{M}_L) \neq \emptyset$. Since $X(\mathcal{M})$ is an open $H$-invariant neighbourhood of $X(\mathcal{M})\cap \mathcal{M}_L$, this implies $x \in X(\mathcal{M})$. Hence, we have shown that $X(\mathcal{M}) = \mathcal{S}_H(X(\mathcal{M}) \cap \mathcal{M}_L)$. This concludes the proof.
\end{proof}

As before, let $\pi_G:
X(\mathcal{M})\rightarrow X(\mathcal{M})\hq G$ and
$\pi_H:X(\mathcal{M}_L)\rightarrow X(\mathcal{M}_L)\hq H $ denote the quotient
maps. From Lemma \ref{saturated}, we obtain
\begin{cor}\label{realisationofquotient}
The analytic Hilbert quotient for the $H$-action on $X(\mathcal{M})$ is given by $\pi_H|_{X(\mathcal{M})}: X(\mathcal{M}) \rightarrow \pi_H(X(\mathcal{M})) \subset X(\mathcal{M}_L)\hq H$. In particular, $X(\mathcal{M})\hq H$ can be realised as an open subset of $X(\mathcal{M}_L)\hq H$.
\end{cor}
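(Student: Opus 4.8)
The plan is to combine Lemma~\ref{saturated} with the general principle that the restriction of an analytic Hilbert quotient map to a saturated open subset is again an analytic Hilbert quotient. First I would observe that $X(\mathcal{M})$ is open in $X$ and, by Lemma~\ref{saturated}, contained in $X(\mathcal{M}_L)$; hence it is open in $X(\mathcal{M}_L)$. Lemma~\ref{saturated} moreover shows that $X(\mathcal{M})$ is $\pi_H$-saturated, i.e.\ $X(\mathcal{M}) = \pi_H^{-1}(\pi_H(X(\mathcal{M})))$. Since an analytic Hilbert quotient map is in particular a topological quotient map, the image $\pi_H(X(\mathcal{M}))$ is open in $X(\mathcal{M}_L)\hq H$: its preimage under $\pi_H$ is precisely the open set $X(\mathcal{M})$.

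Next I would verify that $\pi_H|_{X(\mathcal{M})}: X(\mathcal{M}) \to \pi_H(X(\mathcal{M}))$ satisfies the three defining properties of an analytic Hilbert quotient. The $H$-invariance is inherited directly from $\pi_H$. For the Stein property, let $V \subset \pi_H(X(\mathcal{M}))$ be an open Stein subset; by saturatedness $(\pi_H|_{X(\mathcal{M})})^{-1}(V) = \pi_H^{-1}(V)$, which is Stein because $\pi_H$ is a Stein map. Finally, the structure sheaf identity $(\pi_{H*}\mathcal{O}_{X(\mathcal{M}_L)})^H = \mathcal{O}_{X(\mathcal{M}_L)\hq H}$ restricts over the open subset $\pi_H(X(\mathcal{M}))$ to the desired identity $((\pi_H|_{X(\mathcal{M})})_*\mathcal{O}_{X(\mathcal{M})})^H = \mathcal{O}_{\pi_H(X(\mathcal{M}))}$, again using saturatedness so that sections over $V$ pull back to $H$-invariant sections over $\pi_H^{-1}(V) \subset X(\mathcal{M})$.

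By the uniqueness of analytic Hilbert quotients recalled in Section~\ref{groupsonKaehler}, this identifies $X(\mathcal{M})\hq H$ with the open subset $\pi_H(X(\mathcal{M}))$ of $X(\mathcal{M}_L)\hq H$, which is exactly the assertion. I do not expect any genuine obstacle here: the corollary is essentially a formal consequence of Lemma~\ref{saturated}. The only point requiring a small amount of care is the structure sheaf condition, but since both the openness and the saturatedness of $X(\mathcal{M})$ in $X(\mathcal{M}_L)$ are already in hand, this reduces to the standard local statement that restricting an analytic Hilbert quotient to a saturated open subset yields an analytic Hilbert quotient, in the same spirit as the Stein-map argument used in the proof of Theorem~\ref{normalquotient}.
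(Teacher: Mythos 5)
Your proposal is correct and follows exactly the route the paper intends: the paper derives Corollary~\ref{realisationofquotient} directly from Lemma~\ref{saturated} without further argument, leaving implicit precisely the routine verification you spell out (openness of the saturated set $X(\mathcal{M})$ and of its image, the Stein property, and the structure sheaf identity, all inherited by restriction to a $\pi_H$-saturated open subset). Your write-up simply makes this standard reduction explicit, so there is nothing to correct.
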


Theorem \ref{normalquotient} and Corollary \ref{realisationofquotient} prove part a) of Theorem \ref{main}.

As a preparation for the proof of part b) of Theorem \ref{main}, we will now relate the preceeding discussion to the actions of the groups $K$ and $L$.
\begin{lemma}\label{Kinvariant}
The momentum zero fibre $\mathcal{M}_L$ of $\mu_L$ as well as
the set of semistable points $X(\mathcal{M}_L)$ is $K$-invariant.
\end{lemma}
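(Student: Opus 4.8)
The plan is to treat the two claims in turn, deriving the invariance of $X(\mathcal{M}_L)$ from that of $\mathcal{M}_L$, and using throughout that normality of $L$ in $K$ forces the relevant infinitesimal and group-theoretic invariance properties.

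First I would show that $\mathcal{M}_L = \mu_L^{-1}(0)$ is $K$-invariant. Fix $x \in \mathcal{M}_L$ and $k \in K$. By definition $\mu_L(x) = \iota^*(\mu(x))$ vanishes, which means $\mu(x)(\xi) = 0$ for every $\xi \in \mathfrak{l}$. Using the $K$-equivariance of $\mu$ (property b) of the momentum map) I compute, for $\xi \in \mathfrak{l}$,
\[\mu_L(k \acts x)(\xi) = \mu(k \acts x)(\xi) = \bigl(\Ad^*(k)\mu(x)\bigr)(\xi) = \mu(x)\bigl(\Ad(k^{-1})\xi\bigr).\]
The crucial point is that $\mathfrak{l}$ is an ideal of $\mathfrak{k}$ that is invariant under $\Ad(K)$: since $L$ is normal in $K$ we have $kLk^{-1} = L$ and hence $\Ad(k^{-1})\xi \in \mathfrak{l}$. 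As $\mu(x)$ annihilates all of $\mathfrak{l}$, the right-hand side vanishes, so $\mu_L(k \acts x) = 0$ and $k \acts x \in \mathcal{M}_L$.

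Next I would deduce the $K$-invariance of $X(\mathcal{M}_L) = \{x \in X \,|\, \overline{H \acts x} \cap \mathcal{M}_L \neq \emptyset\}$. The key structural input is that $H = L^\C$ is a normal subgroup of $G = K^\C$, so $k H k^{-1} = H$ for every $k \in K$. This yields the orbit identity $H \acts (k \acts x) = k \acts (H \acts x)$, and since the diffeomorphism induced by $k$ preserves closures, $\overline{H \acts (k \acts x)} = k \acts \overline{H \acts x}$. Given $x \in X(\mathcal{M}_L)$, choose $m \in \overline{H \acts x} \cap \mathcal{M}_L$. Then $k \acts m$ lies in $\overline{H \acts (k \acts x)}$, and by the first part $k \acts m \in \mathcal{M}_L$; hence $\overline{H \acts (k \acts x)} \cap \mathcal{M}_L \neq \emptyset$ and $k \acts x \in X(\mathcal{M}_L)$.

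I do not expect a serious obstacle here: both claims reduce to the $\Ad(K)$-invariance of $\mathfrak{l}$ and the $K$-normalisation of $H$, which are immediate consequences of the normality of $L$ in $K$ (equivalently of $H$ in $G$) recorded at the start of the section. The only point requiring a little care is the orbit identity $\overline{H \acts (k \acts x)} = k \acts \overline{H \acts x}$, where one must invoke normality of $H$ rather than merely of $L$; this is legitimate precisely because $L$ normal in $K$ forces $H = L^\C$ to be normal in $G = K^\C$.
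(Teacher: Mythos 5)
Your proof is correct and follows essentially the same route as the paper: the $K$-invariance of $\mathcal{M}_L$ via equivariance of $\mu$ together with $\Ad(K)(\mathfrak{l}) \subset \mathfrak{l}$, and then the $K$-invariance of $X(\mathcal{M}_L)$ via the identity $\overline{H \acts (k \acts x)} = k \acts \overline{H \acts x}$, which holds because $K$ normalises $H = L^\C$. Your write-up is merely more explicit than the paper's (which compresses the first step to the slightly abusive equation $\mu^\xi(k\acts x) = \Ad^*(k)(\mu(x)) = 0$), so there is nothing to correct.
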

\begin{proof}
Let $x \in \mathcal{M}_L$. Since $L$ is normal in $K$, we have $\Ad(K)(\mathfrak{l})\subset
\mathfrak{l}$. This implies $\mu^\xi(k\acts x) = \Ad^*(k)(\mu(x))=0$. Hence, we have
$k\acts x \in \mathcal{M}_L$. 

Since $\mathcal{M}_L$ is $K$-invariant and $hH = Hk$ holds for all $k \in K$, we have $\overline{H \acts (k\acts x)}\cap \mathcal{M}_L = k\acts (\overline{H\acts x}\cap \mathcal{M}_L)$ for all $x \in X(\mathcal{M}_L)$. This shows the claim.
\end{proof}

Since $X(\mathcal{M}_L)$ is $K$-invariant, it
  follows by considerations analogous to those in the proof of Theorem \ref{normalquotient} that $X(\mathcal{M}_L)\hq H$ is a complex $K$-space. On
  $\pi_H(X(\mathcal{M}))$ the $K$-action coincides with the restriction of the $K^\C$-action to $K$.
However, we do not know if the action defined in this way on
$\pi_H(X(\mathcal{M}))\subset X(\mathcal{M}_L)\hq H$ extends to a
holomorphic action of $K^\C$ on $ X(\mathcal{M}_L)\hq H$ in general.

In two
  important special cases, there is a holomorphic $K^\C$-action on
  $X(\mathcal{M}_L)\hq H$. If $X(\mathcal{M}_L)\hq H$ is compact, its
  group of holomorphic automorphisms $\mathcal{A}$ is a complex Lie group. The
  action of $K$ on $X(\mathcal{M}_L)\hq H$ yields a homomorphism of
  $K$ into $\mathcal{A}$. This extends to a holomorphic homomorphism of $K^\C$
  into $\mathcal{A}$ by the universal property of $K^\C$, hence $K^\C$ acts
  holomorphically on $X(\mathcal{M}_L)\hq H$. The second case is the
  following: as we have seen in Lemma \ref{Kinvariant}, the complement of
  $X(\mathcal{M}_L)$ in $X$ is $K$-invariant. If it is an analytic subset of
  $X$, its $K$-invariance implies its $K^\C$-invariance. In this
  case, $K^\C$ acts on $X(\mathcal{M}_L)$ and hence on $X(\mathcal{M}_L)\hq H$.

\begin{lemma}\label{saturation}
We have $\mathcal{S}_G^{\pi_H(X(\mathcal{M}))}(\pi_H(\mathcal{M})) = \pi_H(X(\mathcal{M}))$.
\end{lemma}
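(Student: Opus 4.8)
The plan is to prove the two inclusions, of which only one carries content. Since the superscript in $\mathcal{S}_G^{\pi_H(X(\mathcal{M}))}$ records that the saturation is formed inside the ambient space $Q_L \definiere \pi_H(X(\mathcal{M}))$, the inclusion $\mathcal{S}_G^{Q_L}(\pi_H(\mathcal{M})) \subseteq \pi_H(X(\mathcal{M}))$ holds by definition; moreover $\pi_H(\mathcal{M}) \subseteq Q_L$ because $\mathcal{M} \subseteq X(\mathcal{M})$, so the statement is well-posed. Everything therefore reduces to proving that every point of $Q_L$ lies in the saturation, i.e.\ that for each $y \in Q_L$ one has $\overline{G \acts y} \cap \pi_H(\mathcal{M}) \neq \emptyset$, with the closure taken in $Q_L$.

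First I would fix $y \in Q_L$ and write $y = \pi_H(x)$ for some $x \in X(\mathcal{M})$. The defining property of the semistable set gives a point $x_0 \in \overline{G \acts x} \cap \mathcal{M}$, where this closure is taken in $X$; equivalently, $x_0$ may be chosen on the unique closed $G$-orbit in the fibre $\pi_G^{-1}(\pi_G(x))$, which meets $\mathcal{M}$. Since $G \acts x \subseteq X(\mathcal{M})$ (the semistable set being $G$-invariant), the subspace-topology identity $\operatorname{cl}_{X(\mathcal{M})}(G \acts x) = \operatorname{cl}_X(G \acts x) \cap X(\mathcal{M})$ together with $x_0 \in \mathcal{M} \subseteq X(\mathcal{M})$ shows that $x_0$ already lies in the closure of $G \acts x$ taken inside $X(\mathcal{M})$.

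Next I would transport this across $\pi_H$. The map $\pi_H \colon X(\mathcal{M}) \to Q_L$ is $G$-equivariant by Theorem \ref{normalquotient}, so $\pi_H(G \acts x) = G \acts y$; and continuity of $\pi_H$ yields $\pi_H\big(\operatorname{cl}_{X(\mathcal{M})}(G \acts x)\big) \subseteq \operatorname{cl}_{Q_L}\big(\pi_H(G \acts x)\big) = \overline{G \acts y}$. Applying this to $x_0$ gives $\pi_H(x_0) \in \overline{G \acts y}$, while $\pi_H(x_0) \in \pi_H(\mathcal{M})$ by construction. Hence $\overline{G \acts y} \cap \pi_H(\mathcal{M}) \neq \emptyset$, so $y \in \mathcal{S}_G^{Q_L}(\pi_H(\mathcal{M}))$, which is exactly the inclusion that remained.

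The argument is essentially a diagram chase, so there is no deep obstacle internal to the lemma; the load-bearing external inputs are the $G$-equivariance of $\pi_H$ (Theorem \ref{normalquotient}) and the defining property of semistability, namely that the $X$-closure of each $G$-orbit in $X(\mathcal{M})$ meets $\mathcal{M}$. The one point requiring care — and the step I would check most carefully — is the bookkeeping of the various closures: the saturation is formed using closure in $Q_L$, whereas semistability is phrased with closure in $X$, and one must confirm, via $G$-invariance of $X(\mathcal{M})$ and continuity of $\pi_H$, that the witnessing intersection with $\mathcal{M}$ survives passage to $Q_L$.
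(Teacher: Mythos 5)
Your proof is correct and follows essentially the same route as the paper: semistability produces a point of $\overline{G \acts x} \cap \mathcal{M}$, and $G$-equivariance plus continuity of $\pi_H$ push it into $\overline{G \acts \pi_H(x)} \cap \pi_H(\mathcal{M})$. The only difference is that you make explicit the bookkeeping of relative versus ambient closures (closure in $X$ versus in $X(\mathcal{M})$ and $Q_L$), which the paper's three-line proof leaves implicit.
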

\begin{proof}
Let $x \in X(\mathcal{M})$. Then, by definition, $\overline{G \acts x}
\cap \mathcal{M} \neq \emptyset$. This implies that $\overline{G \acts
\pi_H(x)}\supset \pi_H(\overline{G \acts x})$ intersects
$\pi_H(\mathcal{M})$ non-trivially. This shows the claim.
\end{proof}
\subsection{K\"ahler reduction in steps}\label{Kaehlerinsteps}
In this section we will prove part b) of Theorem \ref{main}. The
results of Section \ref{complex} show that it is sensible to restrict to the
situation where $X = X(\mathcal{M}) = X(\mathcal{M}_L)$ for the
discussion of K\"ahlerian reduction in this section and we will do
this from now on.

First we take a closer look at the compatibility of the
$G$-action on $X\hq H = X(\mathcal{M})\hq H$ with the orbit type stratification $\mathbb{S}^{X\hq
  H}_H$ of $X\hq H$. For later reference, we note the following
\begin{lemma}\label{subgroups}
Let $M$ be a Lie group with finitely many connected components and
$M_0$ a closed subgroup of $M$. Then, the following are equivalent:
\begin{enumerate}
\item $M$ and $M_0$ are isomorphic as topological groups,
\item $M_0 = M$.
\end{enumerate}
\end{lemma}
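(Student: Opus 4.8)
The implication from $M_0 = M$ to $M \cong M_0$ is trivial, realised by the identity map, so the entire content lies in the converse: a closed subgroup $M_0$ that is isomorphic to $M$ as a topological group must in fact be all of $M$. The plan is to first compare dimensions and then to count connected components, with the finiteness hypothesis entering decisively at the second stage.

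First I would record that, by Cartan's closed subgroup theorem, the closed subgroup $M_0$ is itself a (embedded) Lie subgroup of $M$, so that $\Lie(M_0)$ is a Lie subalgebra of $\Lie(M)$. A topological group isomorphism $M \cong M_0$ is in particular a homeomorphism of the underlying manifolds; by invariance of dimension (equivalently, since a continuous homomorphism between Lie groups is automatically smooth, the isomorphism is an isomorphism of Lie groups) this forces $\dim M_0 = \dim M$. Consequently $\Lie(M_0) \subseteq \Lie(M)$ is a subalgebra of full dimension, hence $\Lie(M_0) = \Lie(M)$. Since the identity component of a Lie group is generated by the image of its Lie algebra under the exponential map, I conclude $M_0^0 = M^0$ and therefore $M^0 \subseteq M_0 \subseteq M$.

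It remains to control the finite group of components. Because $M$ has finitely many connected components, $M/M^0$ is a finite group, and a topological isomorphism $M \cong M_0$ supplies a homeomorphism, so $M$ and $M_0$ have the same (finite) number of connected components; that is, $|M_0/M^0| = |M/M^0|$. The inclusions $M^0 \subseteq M_0 \subseteq M$ induce an injection of finite groups $M_0/M^0 \hookrightarrow M/M^0$ of equal order, which must therefore be a bijection, giving $M_0 = M$.

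The step I expect to be the main obstacle — and the one that genuinely uses the hypothesis — is the component count: without the assumption of finitely many components the conclusion fails, as $\R \times 2\Z$ is a proper closed subgroup of $\R \times \Z$ isomorphic to it. The only other point requiring care is the passage from a purely topological isomorphism to the equality of dimensions; I would justify this either by invariance of dimension or by the automatic smoothness of continuous homomorphisms of Lie groups, whichever is cleanest in context.
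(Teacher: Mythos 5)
Your proof is correct and complete. Note that the paper actually states this lemma without any proof at all (it is recorded ``for later reference'' and then used as a black box in the proofs of Lemma \ref{invariance} and Proposition \ref{stratificationscoincide}), so there is no argument of the authors to compare against; your two-stage argument is the standard way to fill in what they left to the reader. Both stages are sound: a topological group isomorphism between Lie groups is automatically a Lie group isomorphism (or one simply invokes invariance of dimension), giving $\Lie(M_0)=\Lie(M)$ and hence $M_0^0=M^0$; then the finiteness hypothesis makes the injection $M_0/M^0 \hookrightarrow M/M^0$ a bijection of finite groups of equal order, so $M_0 = M_0M^0 = M$. Your closing remark that finiteness is indispensable, witnessed by the proper closed subgroup $\R \times 2\Z \subset \R \times \Z$ isomorphic to the ambient group, correctly pinpoints the role of the hypothesis.
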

As a first application, we get
\begin{lemma}\label{invariance}
The stratification $\mathbb{S}^{X\hq H}_H$ is $G$-invariant.
\end{lemma}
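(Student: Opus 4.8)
The plan is to show that every $g \in G$ maps each stratum of $\mathbb{S}^{X\hq H}_H$ onto itself, exploiting that $G = K^\C$ is connected (since $K$ is connected) together with the holomorphic slice theorem and Lemma \ref{subgroups}. First I would record how $g$ acts on the data defining the stratification. Since $\pi_H$ is $G$-equivariant (Theorem \ref{normalquotient}), we have $\pi_H^{-1}(g\acts p) = g\acts \pi_H^{-1}(p)$, so $g$ permutes the fibres of $\pi_H$. Because $H$ is normal in $G$, for $x \in X$ we have $H\acts(g\acts x) = (gHg^{-1})\acts(g\acts x) = g\acts(H\acts x)$, so $g$ carries $H$-orbits to $H$-orbits; as $g$ acts biholomorphically it preserves closedness, hence $g\acts C(p)$ is the unique closed $H$-orbit in $\pi_H^{-1}(g\acts p)$, i.e.\ $C(g\acts p) = g\acts C(p)$. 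Finally, for $x \in C(p)$ the stabilizer transforms by $H_{g\acts x} = gH_x g^{-1}$. Thus $g$ sends the set of points of $H$-orbit type $(H_x)$ homeomorphically onto the set of points of type $(gH_xg^{-1})$.

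The heart of the argument is to prove that $(gH_xg^{-1})$ and $(H_x)$ are in fact the same $H$-orbit type, i.e.\ that the $H$-orbit type is constant along each $G$-orbit in $X\hq H$. Since $G$ is connected it suffices to establish this locally: I would fix $x \in C(p)$ with closed $H$-orbit and reductive stabilizer $H_x$, and apply the holomorphic slice theorem for the $H$-action at $x$. For $g$ close to the identity the point $g\acts x$ lies in the slice neighbourhood and still has closed $H$-orbit, so its stabilizer $H_{g\acts x} = gH_xg^{-1}$ is $H$-conjugate to a subgroup $M_0$ of $H_x$. On the other hand, conjugation by $g$ is a topological isomorphism $H_x \xrightarrow{\sim} gH_xg^{-1} \cong M_0$, so $M_0$ is a closed subgroup of $H_x$ isomorphic to $H_x$. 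As $H_x$ is reductive it has finitely many connected components, so Lemma \ref{subgroups} forces $M_0 = H_x$. Hence $gH_xg^{-1}$ is $H$-conjugate to $H_x$, and the $H$-orbit type is locally constant along $G\acts p$.

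With local constancy in hand the conclusion is immediate: the orbit type is constant along the connected set $G\acts p$, so $G\acts p$ lies in a single locus of fixed $H$-orbit type and, being connected, in a single connected component of that locus --- that is, in the single stratum $S$ containing $p$. Therefore $g\acts S \subseteq S$ for all $g$, and applying $g^{-1}$ gives $g\acts S = S$, so $\mathbb{S}^{X\hq H}_H$ is $G$-invariant. The step I expect to be the main obstacle is this local-constancy claim: everything else is bookkeeping with equivariance and the normality of $H$, whereas upgrading the slice-theorem inclusion ``$H_{g\acts x}$ is conjugate to a subgroup of $H_x$'' to an equality of orbit types genuinely requires both the reductivity of $H_x$ (to guarantee finitely many components) and Lemma \ref{subgroups}.
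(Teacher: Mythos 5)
Your proof is correct, and its algebraic core coincides with the paper's: both arguments rest on the observation that normality of $H$ in $G$ gives $H_{g\acts x} = gH_xg^{-1}$, so that all $H$-stabilizers along a $G$-orbit are isomorphic as topological groups, and both then invoke Lemma \ref{subgroups} to upgrade an inclusion of conjugate subgroups to an equality. Where you differ is in how the inclusion is produced and how it is globalized. The paper works upstairs on the whole $G$-orbit $G\acts z$ of a point $z\in\mathcal{M}_L$: since all $H$-orbits in $G\acts z$ are closed, there is a principal $H$-orbit type on the connected manifold $G\acts z$, and choosing $z$ principal yields $hH_zh^{-1}\subset H_{g\acts z}=gH_zg^{-1}$ for \emph{every} $g\in G$ at once; Lemma \ref{subgroups} then shows the principal stratum is all of $G\acts z$. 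You instead apply the holomorphic slice theorem for the $H$-action at a point of the closed orbit, which gives the inclusion only for $g$ near the identity, and then globalize by local constancy of the orbit type together with connectedness of $G$ (a standing assumption of Section 2, which the paper also uses, in the guise of connectedness of $G\acts z$). The trade-off: your version only ever invokes the slice theorem for $H$ acting on $X$, where the quotient $X\hq H$ is known to exist, and so sidesteps the minor but real point that the paper's appeal to a principal $H$-orbit type on the locally closed, non-saturated submanifold $G\acts z$ itself needs justification via that same slice theorem; the paper's version, in exchange, handles all $g\in G$ in one stroke and avoids your locally-constant-plus-connected bookkeeping. Your identification of the local-constancy step as the crux, and of reductivity of $H_x$ (hence finiteness of its component group) as the hypothesis feeding Lemma \ref{subgroups}, matches exactly what the paper's proof depends on.
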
 
\begin{proof}
Let $z \in \mathcal{M}_L$ and consider the $H$-action on $G\acts z$. The orbit $H\acts z$ is closed. Since $H\acts g\acts z = g \acts H \acts z$ holds for all $g \in G$, all $H$-orbits in $G\acts z$ are closed (and have the same dimension). Since $G\acts z$ is connected, there is a principal $H$-stratum $S$ in $G\acts z$ and we may assume $z \in S$. For any $g\in G$ this yields $hH_zh^{-1} \subset H_{g\acts z}= g(H\cap G_z)g^{-1} = gH_zg^{-1}$ for some $h \in H$. Lemma \ref{subgroups} implies $h H_z h^{-1} = g H_zg^{-1} = H_{g\acts z}$ and therefore $G\acts z = S$.
\end{proof}
We now investigate the compatibility of the $K^\C$-action on $X\hq H$
with the induced K\"ahler structure.
\begin{prop}\label{Hamiltonian}
\begin{enumerate}
\item The reduced K\"ahler structure of $X\hq H$ is smooth along each $G$-orbit in $X\hq H$.
\item The reduced K\"ahler structure is $K$-invariant. The restriction of $\mu$ to $\mathcal{M}_L$ is $L$-invariant and induces a well-defined continuous map $\tilde{\mu}: X\hq H \rightarrow
\mathfrak{k}^*$ which is a smooth momentum map on each stratum of
$X\hq H$ as well as on
every $G$-orbit.
\end{enumerate}
\end{prop}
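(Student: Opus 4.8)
The plan is to run the reduction construction of Section \ref{KaehlerReduction} with $H=L^\C$ in place of $G$ and then to track the three extra structures separately. Throughout I work on a $\pi_H$-saturated neighbourhood as in the $H$-analogue of Proposition \ref{organisation} (now for $\mu_L$), and I fix the local potential $\rho$ to be \emph{$K$-invariant}: since $\omega$ is $K$-invariant and $K$ is compact this is arranged by averaging over $K$, and by the independence statement at the end of Section \ref{KaehlerReduction} the resulting reduced structure is unchanged. The reduced potential $\tilde\rho$ on $X\hq H$ is the function induced by $\rho|_{\mathcal{M}_L}$ under the homeomorphism $X\hq H\cong \mathcal{M}_L/L$ from diagram \eqref{fundamental} applied to the $H$-reduction. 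Because $\mathcal{M}_L$ is $K$-invariant (Lemma \ref{Kinvariant}) and $\rho$ is $K$-invariant, $\tilde\rho$ is $K$-invariant, giving the $K$-invariance asserted in part (2). Smoothness along $G$-orbits (part (1)) is then immediate: by Lemma \ref{invariance} every $G$-orbit in $X\hq H$ lies in a single stratum $S$ of $\mathbb{S}^{X\hq H}_H$; the reduced structure is smooth along $S$ by the $H$-analogue of Theorem \ref{reduction}, and a $G$-orbit is a submanifold of $S$, so $\tilde\rho$ restricts smoothly to it.

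For the existence of $\tilde\mu$ I first show that $\mu|_{\mathcal{M}_L}$ is $L$-invariant. If $x\in\mathcal{M}_L$ then $\mu(x)$ annihilates $\mathfrak{l}$. For $l\in L$ and any $\xi\in\mathfrak{k}$ the element $\Ad(l^{-1})\xi-\xi$ lies in $\mathfrak{l}$, because the image of $\Ad(l)$ in $GL(\mathfrak{k}/\mathfrak{l})=GL(\Lie(K/L))$ is the adjoint action of the image of $l$ in $K/L$, which is the identity since $l$ maps to the neutral element of the quotient group $K/L$. Hence $(\Ad^*(l)\mu(x))(\xi)=\mu(x)(\Ad(l^{-1})\xi)=\mu(x)(\xi)$, so $\mu(l\acts x)=\Ad^*(l)\mu(x)=\mu(x)$ by $K$-equivariance of $\mu$. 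Thus $\mu|_{\mathcal{M}_L}$ descends to a continuous map $\tilde\mu\colon X\hq H\cong\mathcal{M}_L/L\to\mathfrak{k}^*$, and the same $K$-equivariance together with $K$-invariance of $\mathcal{M}_L$ shows that $\tilde\mu$ is $K$-equivariant.

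It remains to prove that $\tilde\mu$ is a smooth momentum map on each stratum $S$ and on each $G$-orbit. I reduce this to the single assertion that $\tilde\mu$ is \emph{associated to} $\tilde\rho$ in the sense of \eqref{murho}: once $\tilde\rho$ is smooth there and $\tilde\mu^\xi(q)=\left.\frac{d}{dt}\right|_{t=0}\tilde\rho(\exp(it\xi)\acts q)$ holds for all $\xi\in\mathfrak{k}$, the standard fact recorded in Proposition \ref{organisation} identifies $\tilde\mu$ as the momentum map of $\tilde\omega=2i\partial\bar\partial\tilde\rho$ restricted to $S$, and likewise along each $G$-orbit. To verify \eqref{murho} I fix $q=\pi_H(x)$ with $x\in\mathcal{M}_L$ and write, using $G$-equivariance of $\pi_H$, $\exp(it\xi)\acts q=\pi_H(\exp(it\xi)\acts x)$. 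Since $\exp(it\xi)\notin H$ in general, the curve $\exp(it\xi)\acts x$ leaves $\mathcal{M}_L$, and I follow it back by the retraction $\psi_L\colon X\to\mathcal{M}_L$ (the $H$-analogue of the retraction $\psi$ after diagram \eqref{fundamental}), which sends a point to the minimum of $\rho$ along the closed $H$-orbit in its fibre, so that $\tilde\rho(\exp(it\xi)\acts q)=\rho(\psi_L(\exp(it\xi)\acts x))$. Setting $c(t)=\psi_L(\exp(it\xi)\acts x)$ with $c(0)=x$, the correction carrying $\exp(it\xi)\acts x$ to $c(t)$ is tangent at $t=0$ to the $H$-orbit through $x$; by the critical-point characterisation of $\mathcal{M}_L$ in Proposition \ref{organisation} the differential $d\rho_x$ annihilates this direction, so $\left.\frac{d}{dt}\right|_{t=0}\rho(c(t))=\left.\frac{d}{dt}\right|_{t=0}\rho(\exp(it\xi)\acts x)=\mu^\xi(x)=\tilde\mu^\xi(q)$, which is exactly \eqref{murho}.

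The main obstacle is this last step: controlling $\psi_L$ well enough to differentiate $\rho\circ\psi_L$ and to know that the correction term is genuinely $H$-orbit tangent. I expect to handle it by working over a fixed stratum $S$ with the slice-theoretic product structure recalled in Section \ref{stratifications}, where the set of closed $H$-orbits in $\pi_H^{-1}(S)$ is a smooth submanifold containing $\mathcal{M}_L\cap\pi_H^{-1}(S)$, along which $\rho$ and a smooth section of $\pi_H$ are available; there all the derivatives above are legitimate. The computation then transfers verbatim to each $G$-orbit, since $G$-orbits lie inside strata by Lemma \ref{invariance}.
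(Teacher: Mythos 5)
Your part (1) and your $L$-invariance argument are correct, and the latter is actually cleaner than the paper's: you observe that for $l\in L$ and $\xi\in\mathfrak{k}$ the element $\Ad(l^{-1})\xi-\xi$ lies in $\mathfrak{l}$ (conjugation by $l$ descends to the identity of $K/L$), so $\Ad^*(l)\mu(x)=\mu(x)$ as soon as $\mu(x)$ annihilates $\mathfrak{l}$; this handles disconnected $L$ in one stroke, whereas the paper identifies $\mathfrak{k}\cong\mathfrak{k}^*$, reduces to showing that $L$ acts trivially on $\mathfrak{l}^\perp$, and must argue separately for $L_0$ and for the component group $L/L_0$. However, the remaining two steps have genuine gaps. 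The smaller one is the $K$-invariance: you average the local potential $\rho$ over $K$, but your $\rho$ is furnished by the $H$-analogue of Proposition \ref{organisation} on a $\pi_H$-saturated neighbourhood, which is $H$-invariant but in general not $K$-invariant, so $\rho(k\acts x)$ is undefined for most $k$ and the average does not exist. The paper avoids this by applying Proposition \ref{organisation} to the quotient $\pi_G$ instead: the resulting neighbourhood $U$ is $\pi_G$-saturated, hence $G$-invariant \emph{and} $\pi_H$-saturated (every $\pi_G$-fibre is a union of $\pi_H$-fibres), and carries a $K$-invariant potential inducing both $\mu$ and $\mu_L$; $K$-invariance of the structure induced on $\pi_H(U)$ is then immediate.

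The main gap is the one you flag yourself: your verification of \eqref{murho} for $\tilde\mu$ requires differentiating $t\mapsto\rho\bigl(\psi_L(\exp(it\xi)\acts x)\bigr)$, where $\psi_L$ is the fibrewise-minimisation retraction onto $\mathcal{M}_L$. That retraction is only known to be continuous; to make the chain rule and the claim that the correction term is $H$-orbit tangent legitimate, you would need a Morse--Bott-type nondegeneracy of $\rho$ along the minimising $L$-orbits plus an implicit-function-theorem argument over each stratum --- essentially redoing the section construction of Section \ref{KaehlerReduction} --- and your proposal only gestures at this. The paper's proof never touches the retraction: since $\pi_L:\mathcal{M}_L\cap\pi_H^{-1}(S)\to S$ is a smooth $K$-equivariant \emph{submersion}, smoothness of $\tilde\mu$ on $S$ follows from smoothness of $\mu$ upstairs, and the momentum identity $d\tilde\mu^\xi=\imath_{\xi}\tilde\omega$ on $S$ follows by pulling both sides back along $\pi_L$ and using the defining relation $\pi_L^*\tilde\omega=\omega|_{\mathcal{M}_L\cap\pi_H^{-1}(S)}$ of the reduced structure, so that all differentiation happens on the smooth manifold $\mathcal{M}_L\cap\pi_H^{-1}(S)$ where it is unproblematic. (Your final transfer to $G$-orbits via Lemma \ref{invariance} is the same as the paper's and is fine.) If you want to salvage your route rather than adopt the submersion argument, replace the chain rule through $\psi_L$ by a two-sided envelope estimate for $F(t)=\min_{h\in H}\rho(\exp(it\xi)h\acts x)$: the upper bound comes from testing at $h=e$, the lower bound from near-minimisers $w_t$ staying in a compact set (the exhaustion property) and converging into $L\acts x$, where $\mu^\xi$ is constant by the $L$-invariance you already proved; this needs no differentiability of $\psi_L$ at all.
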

\begin{proof}
The induced K\"ahler
structure $\tilde{\omega}$ is smooth along the stratification
$\mathbb{S}^{X\hq H}_H$ by construction. Furthermore, Lemma \ref{invariance} shows that this stratification
is $G$-invariant. Hence, $\tilde{\omega}$ is smooth
along every $G$-orbit in $X\hq H$. 

Let $x \in \mathcal{M}_L \subset X$. Applying Propostion
\ref{organisation} to $X$, to the momentum map $\mu: X \rightarrow
\mathfrak{k}^*$ and to the quotient $\pi_G: X \to X\hq G$, we get a
$\pi_G$-saturated neighbourhood $U$ of $x$ in $X$ on which the K\"ahler
structure and the momentum maps $\mu$ and $\mu_L$ are induced by a $K$-invariant strictly
plurisubharmonic function $\rho: U \rightarrow \R$. Since the K\"ahler
structure on $\pi_H(U) \subset X\hq H$ is induced by the restriction
of $\rho$ to $\mathcal{M}_L \cap U$ it is $K$-invariant.

For the $L$-invariance of $\mu|_{\mathcal{M}_L}$ we follow \cite{StratifiedSymplectic}: first, we equivariantly identify $\k$ with $\mathfrak{k}^*$. The image of $\mu: \mathcal{M}_L
\rightarrow \mathfrak{k}$ is
contained in $\mathfrak{l}^\perp \subset \mathfrak{k}$. Here, $^\perp$ denotes the perpendicular with respect to a chosen $K$-invariant inner product on $\mathfrak{k}$. Hence, the
equation $\mu(l\acts x) = \Ad(l)(\mu(x))$ implies that it is sufficient
to show that $L$ acts trivially on $\mathfrak{l}^\perp$. As
$\mathfrak{l}^\perp$ is $L$-invariant, $[\mathfrak{l},\mathfrak{l}^\perp]$ is
contained in $\mathfrak{l}^\perp$. But $\mathfrak{l}$ is an ideal in
$\mathfrak{k}$ and hence, $[\mathfrak{l},\mathfrak{l}^\perp]$ is also contained
in $\mathfrak{l}$. This implies that the component of the identity $L_0$ of
$L$ acts trivially on $\mathfrak{l}^\perp$. Since $K$ is connected, the image of the map $\phi: K
\times L_0 \rightarrow K, (k,l) \mapsto klk^{-1}$ lies in some connected
component of $L$, which has to be $L_0$. Hence, $L_0$ is normal in
$K$. The finite group $L/L_0$ is normal in the connected group
$K/L_0$, hence central. It follows that $L/L_0$ acts trivially on $\Lie
(K/L_0)= \mathfrak{l}^\perp$. This shows that $L$ acts
trivially on $\mathfrak{l}^\perp$.

Hence, $\mu|_{\mathcal{M}_L}$ defines a continuous map $\tilde{\mu}$ on
$X\hq H$. This map is a smooth momentum map along each stratum $S$ of $X\hq H$, since $\pi_L: \mathcal{M}_L \cap \pi_H^{-1}(S) \rightarrow \tilde{S}$ is a smooth $K$-equivariant submersion. Since the stratification is $G$-invariant, $\tilde \mu$ is a smooth momentum map along each $G$-orbit in $X\hq H$.
\end{proof}
\begin{rem}
By construction, we see that $\tilde{\mathcal{M}}\definiere
\tilde{\mu}^{-1}(0)$ is equal to $\pi_H(\mathcal{M})$. Furthermore,
Lemma \ref{saturation} shows that $\mathcal{S}_G(\tilde{\mathcal{M}})=
X\hq H$.
\end{rem}
We will now investigate the possibility of carrying out K\"ahler reduction with respect to the quotient $\bar \pi: X\hq H \rightarrow X\hq G$.

First we define a second stratification of $X\hq G$ as follows: let $S$ be a stratum of the $H$-orbit type stratification of $X\hq H$ such that
$S\cap \tilde{\mathcal{M}} \neq \emptyset$. Stratify $\bar \pi (\bar
S) \setminus \bar\pi(\partial S)$ by $G$-orbit types with respect to the
$G$-action on $X\hq H$. Repeating this procedure for all strata yields a stratification of $X\hq G$ which we call the
\emph{$2$-step-stratification}. We will see later on that it coincides with the stratification of $X\hq G$ by $G$-orbit types (see Section \ref{stratificationinsteps}).

We notice that by Propostition \ref{Hamiltonian} and by the construction of the $2$-step stratification $X\hq H$ is a stratified Hamiltonian K\"ahler $K^\C$-space. We also note that by the fundamental diagram \eqref{fundamental}, $X\hq G$ is homeomorphic  to $\tilde{\mathcal{M}}/K$.  Applying the procedure described in Section \ref{KaehlerReduction}, we have
\begin{prop}
The analytic Hilbert quotient $X\hq G$ carries a K\"ahler structure induced from $X\hq H$ by K\"ahlerian reduction for the quotient $\tilde {\pi}_K: \tilde{ \mathcal{M}} \to \tilde{\mathcal{M}}/K$. This K\"ahler structure is smooth along the $2$-step stratification. 
\end{prop}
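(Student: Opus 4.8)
The plan is to identify $X\hq H$---endowed with its reduced K\"ahler structure, the holomorphic $G$-action of Theorem \ref{normalquotient} and the momentum map $\tilde\mu$---as a stratified Hamiltonian K\"ahler $K^\C$-space in the sense of the Remark in Section \ref{KaehlerReduction}, and then to run the reduction construction of that section for the analytic Hilbert quotient $\bar\pi: X\hq H \to (X\hq H)\hq G = X\hq G$. First I would collect the data that is already in place: Theorem \ref{normalquotient} supplies the $G$-action and the quotient $\bar\pi$ with $(X\hq H)\hq G$ biholomorphic to $X\hq G$; Theorem \ref{reduction}, applied to the reduction of $X$ by $H$, gives a K\"ahler structure on $X\hq H$ that is smooth along the $H$-orbit type strata $\mathbb{S}^{X\hq H}_H$; Proposition \ref{Hamiltonian} gives smoothness along every $G$-orbit, $K$-invariance, and a continuous momentum map $\tilde\mu$ which is smooth along each stratum and each $G$-orbit; and the Remark following Proposition \ref{Hamiltonian} records $\tilde{\mathcal{M}}=\pi_H(\mathcal{M})$ together with the semistability $\mathcal{S}_G(\tilde{\mathcal{M}})=X\hq H$. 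I take $\mathbb{S}$ to be the $2$-step stratification and $X_{reg}$ to be the principal stratum of $\mathbb{S}^{X\hq H}_H$, along which both the reduced K\"ahler structure and $\tilde\mu$ are smooth.

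The single hypothesis of the Remark not already recorded is that the reduced K\"ahler structure be smooth along the set of closed $G$-orbits in $\bar\pi^{-1}(S)$ for every $2$-step stratum $S$; this is the step I expect to require real work. By construction $S$ is a $G$-orbit type stratum of $\bar\pi(\bar T)\setminus\bar\pi(\partial T)$ for some $H$-orbit type stratum $T$ of $X\hq H$. By Lemma \ref{invariance} the stratum $T$, and hence $\bar T$ and $\partial T$, are $G$-invariant. For $p\in S$ pick $q\in\bar T$ with $\bar\pi(q)=p$; the unique closed $G$-orbit $C(p)$ in the fibre $\bar\pi^{-1}(p)$ lies in the closure of every orbit of that fibre, so $C(p)\subset\overline{G\acts q}\subset\bar T$, while $p\notin\bar\pi(\partial T)$ gives $\bar\pi^{-1}(p)\cap\partial T=\emptyset$ and therefore $C(p)\cap\partial T=\emptyset$. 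Hence $C(p)\subset T$, so the entire set of closed $G$-orbits over $S$ is contained in the single $H$-stratum $T$. Since the reduced K\"ahler structure is smooth along $T$ by Theorem \ref{reduction}, it is smooth along this set. Combined with the fact that the $2$-step stratification refines both the complex-space stratification and the $G$-orbit type decomposition of $X\hq G$ by construction, this confirms that $X\hq H$ is a stratified Hamiltonian K\"ahler $K^\C$-space with respect to $\mathbb{S}$.

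With all hypotheses of the Remark verified and $\mathcal{S}_G(\tilde{\mathcal{M}})=X\hq H$ in force, I would finish by applying the construction of Section \ref{KaehlerReduction} verbatim: restricting the local potentials of $X\hq H$ to $\tilde{\mathcal{M}}$ and pushing them down yields continuous strictly plurisubharmonic functions on $(X\hq H)\hq G=X\hq G$ that fit together to a K\"ahler structure smooth along $\mathbb{S}$, and by the fundamental diagram \eqref{fundamental} for $X\hq H$ this is precisely the K\"ahlerian reduction for $\tilde\pi_K:\tilde{\mathcal{M}}\to\tilde{\mathcal{M}}/K\cong X\hq G$. The crux is the smoothness-along-closed-orbits argument of the second paragraph, where the $G$-invariance of the $H$-strata and the boundary bookkeeping of the $2$-step stratification do the essential work; the remaining points are routine verifications with results already established.
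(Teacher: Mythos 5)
Your proposal is correct and takes essentially the same route as the paper: the paper likewise observes (via Proposition \ref{Hamiltonian}, the Remark following it, and the construction of the $2$-step stratification) that $X\hq H$ is a stratified Hamiltonian K\"ahler $K^\C$-space, and then applies the reduction procedure of Section \ref{KaehlerReduction} to $\bar\pi$, using the fundamental diagram \eqref{fundamental} to identify $X\hq G$ with $\tilde{\mathcal{M}}/K$. The only difference is one of detail: where the paper asserts the needed hypotheses ``by the construction of the $2$-step stratification'', you supply the explicit verification that the closed $G$-orbits over a $2$-step stratum lie in a single $H$-stratum $T$ (so that smoothness along $T$ from Theorem \ref{reduction} gives the required smoothness along closed orbits), which is a correct filling-in of the step the paper leaves implicit.
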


We now have two K\"ahlerian structures on the complex space $X\hq
G$: the structure $\omega _{red}$ that we get by reducing the K\"ahler form $\omega$ on $X$ to
$X\hq G$ and the K\"ahler structure $\hat \omega$ that we get by first reducing
$\omega$ to the K\"ahler structure $\tilde{\omega}$ on $X\hq H$ and
then reducing $\tilde{\omega}$ to $\hat \omega$ on $X\hq G$. In order to complete the proof of Theorem \ref{main} b), we have to prove that 
$\omega _{red}$ and $\hat \omega$ coincide.

Let $y_0 \in X\hq G$. Proposition \ref{organisation} yields a open neighbourhood $U$ of $y_0$ in $X\hq G$ such that on $\pi_G^{-1}(U)$ the K\"ahler structure and the momentum maps $\mu$ and $\mu_L$ are given by $\rho$. It follows that the K\"ahler structure on $\bar \pi^{-1}(U)$ is given by $\tilde \rho$ which is induced via $\rho|_{\mathcal{M}_L\cap \pi_G^{-1}(U)}$.

\begin{lemma}\label{potentialonfirstreduction}
$\tilde{\rho}$ is an exhaustion along every fibre of $\bar \pi$. On $\bar \pi^{-1}(U)$, we have $\tilde{\mu} =
  \mu_{\tilde{\rho}}$. 
\end{lemma}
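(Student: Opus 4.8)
The plan is to treat the two assertions separately, after recording the one structural fact that drives both. Applying Proposition~\ref{organisation} to the $H$-reduction $\pi_H$ in place of $\pi_G$, the function $\rho$ restricts to a $K$-invariant strictly plurisubharmonic exhaustion on every fibre of $\pi_H$, the map $\mu_L$ is associated to $\rho$, and the critical set of $\rho$ along these fibres is exactly $\mathcal{M}_L$; as in the proof of Lemma~\ref{saturated}, for a strictly plurisubharmonic exhaustion this critical set is where $\rho$ attains its fibrewise minimum. Consequently, for the locally defined reduced potential $\tilde\rho$ on $\bar\pi^{-1}(U)$ we have, for every $w \in \pi_G^{-1}(U)$,
\[ \tilde\rho(\pi_H(w)) \;=\; \min_{z \in \pi_H^{-1}(\pi_H(w))} \rho(z) \;\le\; \rho(w), \]
with equality whenever $w \in \mathcal{M}_L$. (Here we use $\pi_H^{-1}(\pi_H(w)) \subset \pi_G^{-1}(\pi_G(w))$, which follows from $\pi_G = \bar\pi\circ\pi_H$.)

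For the exhaustion statement I would fix $p \in U$ and describe the fibre $\bar\pi^{-1}(p) = \pi_H(\pi_G^{-1}(p))$. Writing $F \definiere \pi_G^{-1}(p)$, Proposition~\ref{organisation} gives that $\rho|_F$ is an exhaustion, so each sublevel set $\{z \in F \fdg \rho(z) \le c\}$ is compact. Using the displayed inequality one checks that the sublevel set $\{\tilde\rho \le c\}$ inside $\bar\pi^{-1}(p)$ equals the continuous image $\pi_H(\{z \in F \fdg \rho(z) \le c\})$; being a continuous image of a compact set it is compact, and since $X\hq H$ is Hausdorff it is closed. Hence $\tilde\rho$ is an exhaustion along every fibre of $\bar\pi$.

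The identity $\tilde\mu = \mu_{\tilde\rho}$ is the heart of the matter. I would fix $\tilde x \in \bar\pi^{-1}(U)$ and $\xi \in \mathfrak{k}$, choose a representative $x_0 \in \mathcal{M}_L$ with $\pi_H(x_0) = \tilde x$, so that by definition of $\tilde\mu$ one has $\tilde\mu^\xi(\tilde x) = \mu^\xi(x_0)$. Set
\[ h(t) \definiere \rho(\exp(it\xi)\acts x_0), \qquad g(t) \definiere \tilde\rho(\exp(it\xi)\acts \tilde x). \]
Because $\pi_H$ is $G$-equivariant and $\bar\pi$ is $G$-invariant, the curve $\exp(it\xi)\acts x_0$ stays in the single $G$-fibre $F$, and $g(t) = \tilde\rho(\pi_H(\exp(it\xi)\acts x_0))$ stays over $p = \pi_G(x_0) \in U$. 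The displayed inequality then gives $g(t) \le h(t)$ for all $t$, while $x_0 \in \mathcal{M}_L$ forces equality at $t = 0$. The crucial observation is that $g$ is smooth: the curve $t \mapsto \exp(it\xi)\acts\tilde x$ lies in a single $G$-orbit of $X\hq H$, and by Proposition~\ref{Hamiltonian} the reduced K\"ahler structure, hence $\tilde\rho$, is smooth along $G$-orbits. Since $h$ is manifestly smooth and $h - g \ge 0$ vanishes at $t = 0$, the point $t = 0$ is a minimum of $h - g$, whence $g'(0) = h'(0)$. Combining this with $\mu = \mu_\rho$ from Proposition~\ref{organisation} (equation~\eqref{murho}) yields
\[ \mu_{\tilde\rho}^\xi(\tilde x) = g'(0) = h'(0) = \mu_\rho^\xi(x_0) = \mu^\xi(x_0) = \tilde\mu^\xi(\tilde x), \]
as desired.

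The main obstacle is precisely this derivative comparison, and it is the reason Proposition~\ref{Hamiltonian} is proved first: without the smoothness of $\tilde\rho$ along $G$-orbits one cannot differentiate $g$, and the elementary ``touching from below'' argument collapses. Two points require care. First, $\xi$ is a general element of $\mathfrak{k}$ rather than of a complement of $\mathfrak{l}$; this causes no trouble, since the argument never separates the $\mathfrak{l}$- and transverse directions and only compares the two functions $g$ and $h$ along the full one-parameter subgroup $t \mapsto \exp(it\xi)$. Second, one must keep the entire curve inside the domain $\bar\pi^{-1}(U)$ on which $\tilde\rho$ is defined; this is guaranteed by the $G$-invariance of $\bar\pi$ together with the fact that $\exp(it\xi) \in G$.
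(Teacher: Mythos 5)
Your proof is correct. For the first assertion it follows essentially the paper's route: both arguments express $\tilde\rho$ on a $\bar\pi$-fibre through $\rho$ on $\mathcal{M}_L\cap F$ and push compact sublevel sets of the exhaustion $\rho|_F$ down through $\pi_H$; your fibrewise-minimum formula is the same fact, made explicit via the argument of Lemma~\ref{saturated}. For the identity $\tilde\mu = \mu_{\tilde\rho}$, however, you take a genuinely different route. The paper argues abstractly: $\tilde\mu$ and $\mu_{\tilde\rho}$ are both momentum maps for the $K$-action on $\bar\pi^{-1}(U)$, hence differ by a constant $c\in\mathfrak{k}^*$, and $c=0$ follows by evaluating at a \emph{single} point $z_0\in\tilde{\mathcal{M}}$: there $\tilde\mu(z_0)=0$ by definition, while $\mu_{\tilde\rho}(z_0)=0$ because $\tilde\rho$ restricted to the fibre $\bar\pi^{-1}(\bar\pi(z_0))$ is minimal, hence critical, along $K\acts z_0$. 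You instead prove the equality pointwise at \emph{every} $\tilde x$, comparing $g(t)=\tilde\rho(\exp(it\xi)\acts\tilde x)$ with $h(t)=\rho(\exp(it\xi)\acts x_0)$ and using $g\le h$ with equality at $t=0$ (``touching from below'') to force $g'(0)=h'(0)$. Both arguments rest on the same two ingredients --- fibrewise minimality of $\tilde\rho$ on $\tilde{\mathcal{M}}$, and smoothness of $\tilde\rho$ along $G$-orbits from Proposition~\ref{Hamiltonian}, without which neither derivative exists --- but yours is more self-contained: it avoids invoking uniqueness of momentum maps up to an additive constant (which tacitly requires connectedness of $\bar\pi^{-1}(U)$ and the verification that $\mu_{\tilde\rho}$ is itself a momentum map for the reduced structure), at the price of carrying the minimum formula over $\pi_H$-fibres along the whole curve rather than evaluating once on $\tilde{\mathcal{M}}$.
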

\begin{proof}
Let $y \in U$ and let $F \definiere \pi_G^{-1}(y)$. We have $\mathcal{M} \cap F = K \acts x$ for some $x \in F$. The restriction $\rho|_{F}$ is an exhaustion. It is minimal along $\mathcal{M} \cap F = K \acts x$. Since $\tilde{F} \definiere \bar \pi^{-1}(y) = \pi_H(F)$ and since $\tilde{\rho}|_{\tilde{F}}$ is induced via the restriction of $\rho$ to $\mathcal{M}_L\cap F$, $\tilde {\rho}|_{\tilde{F}}$ is minimal along $\tilde{\mathcal{M}} \cap \tilde{F} = K\acts \pi_H(x)$. It follows that $\tilde{\rho}|_{\tilde{F}}$ is an exhaustion.

Since both $\tilde{\mu}$ and $\mu_{\tilde{\rho}}$ are momentum maps
for the $K$-action on $\bar \pi^{-1}(U)$, they differ by a constant $c \in
\mathfrak{k}^*$. Hence, it suffices to show that there exists an $z
\in \bar \pi^{-1}(U)$ such that $\tilde{\mu} (z) = \mu_{\tilde{\rho}}(z)$
holds. Let $z_0 \in
\tilde{\mathcal{M}}\cap \bar \pi^{-1}(U)$. As we have seen above, $\tilde {\rho}|_{\bar \pi^{-1}(\bar \pi (z_0))}$ is critical along $K\acts z_0$. Therefore, $\mu_{\tilde{\rho}}(z_0) =
\tilde{\mu}(z_0)=0$ and hence $c = 0$.
\end{proof}

The equality of the two K\"ahler structures on $U$ now follows from the construction of reduced structures:
the K\"ahler structure $\omega_{red}$ is formed by a single
function $\rho_{red}$ which is induced by the restriction of $\rho$ to
$\mathcal{M}\cap \pi_G^{-1}(U)$. The K\"ahler structure $\tilde{\omega}$ on $\bar \pi^{-1}(U)$ is given by $\tilde{\rho}$. Lemma \ref{potentialonfirstreduction} shows that the K\"ahler structure $\hat
\omega$ is given by a single function $\hat \rho$ that is induced by
the restriction of $\tilde{\rho}$ to $\tilde{\mathcal{M}}\cap \bar\pi^{-1}(U)$. However, $\tilde{\mathcal{M}}
= \mathcal{M}/L$ and therefore, $\hat \rho $ coincides with
$\rho_{red}$ after applying the homeomorphism $(\mathcal{M}/L)/K \simeq \mathcal{M}/K$. This concludes the proof of Theorem \ref{main}.
\subsection{Stratifications in steps}\label{stratificationinsteps}
Here we prove part c) of Theorem \ref{main}. We formulate it as
\begin{prop}\label{stratificationscoincide}
The stratification of $X\hq G$ by $G$-orbit types coincides with the
$2$-step-stra\-ti\-fi\-ca\-tion of $X\hq G$.
\end{prop}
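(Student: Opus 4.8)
The plan is to describe both stratifications through a single invariant attached to a point $y \in X\hq G$ --- the orbit type of a representative $x \in \mathcal{M}$ of the closed $G$-orbit over $y$ --- and then to compare them. First I would analyse the first step of the two-step construction. Fix an $H$-orbit type stratum $S$ of $X\hq H$ with $S \cap \tilde{\mathcal{M}} \neq \emptyset$. By Lemma \ref{invariance} the set $S$ is $G$-invariant, hence so are its closure $\bar S$ and its boundary $\partial S$, both of which are closed. Every fibre of $\bar\pi$ contains a unique closed $G$-orbit $C(y)$, which lies in the closure of every orbit in that fibre; since $\bar S$ and $\partial S$ are closed and $G$-invariant, this gives $y \in \bar\pi(\bar S)$ if and only if $C(y) \subseteq \bar S$, and $y \in \bar\pi(\partial S)$ if and only if $C(y) \subseteq \partial S$. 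Therefore
\[\bar\pi(\bar S)\setminus\bar\pi(\partial S) = \{\, y \in X\hq G \mid C(y)\cap S \neq \emptyset\,\}.\]
For $y = \pi_G(x)$ with $x \in \mathcal{M}$ one has $C(y) = G\acts \pi_H(x)$, since $\pi_H(x) \in \tilde{\mathcal{M}} = \tilde\mu^{-1}(0)$ has closed $G$-orbit by Proposition \ref{Hamiltonian} and the fundamental diagram \eqref{fundamental}; thus $C(y)\cap S \neq \emptyset$ exactly when $\pi_H(x) \in S$. Consequently the sets $\bar\pi(\bar S)\setminus\bar\pi(\partial S)$ are disjoint, cover $X\hq G$, and partition it according to the $H$-orbit type of $\pi_H(x)$; in particular the two-step construction is well defined.

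Next I would translate everything into the compact picture. For $x \in \mathcal{M} \subseteq \mathcal{M}_L$ the relevant stabilisers are complexifications of compact groups: $G_x = (K_x)^\C$, $H_x = H\cap G_x = (K_x\cap L)^\C$, and, using uniqueness of the closed $H$-orbit in each $\pi_H$-fibre together with $gH = Hg$ exactly as in the proof of Lemma \ref{invariance}, $G_{\pi_H(x)} = G_x H = (K_x L)^\C$. Since two compact subgroups are $K$-conjugate (resp.\ $L$-conjugate) if and only if their complexifications are $G$-conjugate (resp.\ $H$-conjugate), the $G$-orbit type stratification is the decomposition of $X\hq G$ by the $K$-conjugacy class of $K_x$, while the two-step stratification is the decomposition by the pair consisting of the $L$-conjugacy class of $K_x\cap L$ and the $K$-conjugacy class of $K_x L$, each refined into connected components.

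The easy inclusion is that the $G$-orbit type stratification refines the two-step stratification. Indeed, if $K_{x'} = kK_xk^{-1}$ with $k \in K$ then $K_{x'}L = k(K_xL)k^{-1}$ by normality of $L$, so the $K$-conjugacy class of $K_xL$ is constant on a $G$-orbit type stratum; and the $L$-conjugacy class of $K_x\cap L$ is locally constant there, because along an orbit type stratum the stabiliser $K_x$ may be taken locally constant by the slice theorem, whence $K_x\cap L$ is locally constant and its $L$-conjugacy class, varying continuously in a discrete set, is locally constant as well. As a $G$-orbit type stratum is connected, it lies in a single two-step stratum.

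The main obstacle is the converse: that the finer-looking two-step data does not actually subdivide a $G$-orbit type stratum, equivalently that the $K$-conjugacy class of $K_x$ is locally constant on each two-step stratum. Here I would use the holomorphic slice theorem (\cite{HeinznerGIT}) at a representative $x_0\in\mathcal{M}$: a $G$-saturated neighbourhood of $G\acts x_0$ is $G\times_{\hat G}V$ for a slice representation $V$ of $\hat G := G_{x_0} = (K_{x_0})^\C$, so that near $y_0 = \pi_G(x_0)$ the quotient $X\hq G$ is modelled on $V\hq \hat G$ and the nearby stabilisers $K_x$ are conjugate to subgroups of $\hat K := K_{x_0}$. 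Since $L$ is normal in $K$, the group $\hat L := L\cap \hat K$ is normal in $\hat K$, and for these nearby stabilisers $K_x\cap L = K_x\cap\hat L$ and $K_xL\cap \hat K = K_x\hat L$. Thus local constancy of the $K$-conjugacy class of $K_x$ reduces to the corresponding statement for the compact pair $(\hat K,\hat L)$ acting through the slice, where the admissible stabilisers are controlled and the pair consisting of the $\hat L$-conjugacy class of $K_x\cap\hat L$ and the $\hat K$-conjugacy class of $K_x\hat L$ pins down the $\hat K$-conjugacy class of $K_x$; this is the symplectic reduction-in-steps compatibility of \cite{StratifiedSymplectic}, transported to $X\hq G$ via the identifications $\tilde{\mathcal{M}} \cong \mathcal{M}/L$ and $(\mathcal{M}/L)/K \cong \mathcal{M}/K$ and the compact--complex matching of \cite{SjamaarSlices}. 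With local constancy established, each connected two-step stratum lies in a single $G$-orbit type stratum; together with the easy inclusion this yields equality of the two stratifications.
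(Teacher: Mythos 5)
Your first two paragraphs are sound: the identification $\bar\pi(\bar S)\setminus\bar\pi(\partial S)=\{y\,|\,C(y)\subseteq S\}$ is correct, and for $x\in\mathcal{M}$ one indeed has $G_x=(K_x)^\C$, $H_x=(K_x\cap L)^\C$ and $G_{\pi_H(x)}=G_xH=(K_xL)^\C$, so the $G$-orbit type stratification is governed by the $K$-class of $K_x$ and the $2$-step stratification by the pair consisting of the $L$-class of $K_x\cap L$ and the $K$-class of $K_xL$. Your ``easy'' inclusion is also essentially correct (the paper proves it by a different route, working directly with the complex groups: the principal $H$-orbit type on the connected set $Y$ of closed orbits in $\pi_G^{-1}(S)$ together with Lemma \ref{subgroups} shows the $H$-type of $H_y=G_y\cap H$ is constant along a $G$-stratum).

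The converse, however, is where all the work lies, and there your proposal has a genuine gap. The statement you reduce it to --- that the pair consisting of the $\hat L$-class of $K_x\cap\hat L$ and the $\hat K$-class of $K_x\hat L$ ``pins down'' the $\hat K$-class of $K_x$ --- is false as a pointwise, group-theoretic assertion: take $\hat K=T^2$, $\hat L=S^1\times\{1\}$, $A=\{1\}\times S^1$ and $B=\{(t,t)\,|\,t\in S^1\}$; then $A\cap\hat L=B\cap\hat L=\{1\}$ and $A\hat L=B\hat L=T^2$, yet $A$ and $B$ are not conjugate in the abelian group $T^2$. So no argument that only compares these invariants point by point can succeed; the proposition holds only because of a global input along the stratum. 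Moreover, the appeal to \cite{StratifiedSymplectic} and \cite{SjamaarSlices} is circular: neither reference contains a comparison of the two-step with the one-step stratification --- that comparison is precisely Proposition \ref{stratificationscoincide}. What the paper actually does in this direction is the following. For $w,w'\in\pi_G^{-1}(S)\cap\mathcal{M}$, $S$ a $2$-step stratum, the two conjugacy conditions ($H_w\sim_H H_{w'}$ and $HG_w\sim_G HG_{w'}$) give, by a Lie-algebra dimension count, $\dim G_w=\dim G_{w'}=:\dim G-m_0$; hence the set of points of $\pi_G^{-1}(S)$ with closed $G$-orbit equals $\pi_G^{-1}(S)\cap\{x\,|\,\dim G\acts x\le m_0\}$ and is analytic, so it carries a principal $G$-orbit type. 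Principality yields an honest inclusion $g_0G_wg_0^{-1}\subseteq G_{w'}$ --- exactly the ingredient that rules out the torus example above, where neither subgroup contains a conjugate of the other --- and only then do the two conjugacy conditions, fed through the exact sequence $1\to H_y\to G_y\to G_yH/H\to 1$ and Lemma \ref{subgroups} applied twice (once to get $H_w=H_{w'}$, once to the quotients $G_w/H_w\hookrightarrow G_{w'}/H_w$), upgrade this inclusion to the equality $G_w=G_{w'}$. This principal-orbit-type argument on the analytic set of closed-orbit points is the missing idea in your proposal.
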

\begin{proof}
We do this in two steps: first, we claim that the $G$-stratification
of $X\hq G$ is finer than the $2$-step-stratification. Let $S$
be a stratum of the $G$-stratification of $X\hq G$ and let $Y$ be the set
of closed orbits in $\pi_G^{-1}(S)$. Then, since $S$ and $G$ are
connected, $Y$ is a connected holomorphic
$H$-manifold.

Every $H$-orbit in $Y$ is closed. Stratify $Y$ by $H$-orbit types. Since $Y$
is connected, there exists a principal orbit type $(H_0)$ for the
$H$-action on $Y$. Then, $H_0= G_0 \cap H$, where $(G_0)$ is the orbit
type of the $G$-stratum $S$. Let $y \in Y$. We have $H_y = g H_0 g^{-1}$ for some $g \in G$. Since $(H_0)$ is the
principal isotropy type, there exists an $h \in H$ such that $h H_0 h^{-1} < H_y$. Lemma \ref{subgroups} implies that $H_y$ is conjugate to $H_0$ in $H$. Hence,
$\pi_H(Y)$ is a union of closed $G$-orbits that is contained in a
single stratum $\mathbb{S}_H^{X\hq H}$. Furthermore, the $G$-isotropy
group of each element in $\pi_H(Y)$ is conjugate in $G$ to $G_0H$. This implies that the
$G$-stratification of $X\hq G$ is finer than the
$2$-step-stratification.

In the second step, we prove that the $2$-step-stra\-ti\-fi\-cation of $X\hq
G$ is finer than the $G$-stra\-ti\-fi\-ca\-tion of $X\hq G$. Let $S$ be a
stratum of the $2$-step stratification of $X\hq G$. Let $w, w' \in \pi_G^{-1}(S)
\cap \mathcal {M}$. Then, by construction of $S$, we have 
\begin{enumerate}
\item \label{eins}$H_w$ is conjugate to $H_{w'}$ in $H$ and 
\item\label{zwei}$HG_w$ is conjugate to $HG_{w'}$ in $G$.
\end{enumerate}
We claim that $\dim G_w = \dim
G_{w'}$. Indeed, 1) and 2) imply the following dimension equalities on
the Lie algebra level:
\begin{align*}
\dim \mathfrak{g}_w &= \dim (\mathfrak{g}_w + \mathfrak{h})-\dim
\mathfrak{h} + \dim (\mathfrak{g}_w \cap \mathfrak{h})\\
&= \dim (\mathfrak{g}_{w'} + \mathfrak{h})-\dim
\mathfrak{h} + \dim (\mathfrak{g}_{w'} \cap \mathfrak{h})\\
&= \dim \mathfrak{g}_{w'}
\end{align*}
Hence, $\dim G\acts w = \dim G \acts {w'} =: m_0$. The set
\[Y\definiere  \{x \in \pi_G^{-1}(S)\,|\, G\acts x \text{ closed in }X \}=
\pi_G^{-1}(S)\cap \{x \in X \,|\, \dim G\acts x \leq m_0 \}\]is
analytic in $\pi_G^{-1}(S)$. There exists a principal orbit type $(G_0)$ for the action of
$G$ on $Y$. Let $w\in Y$ such that $(G_w)=(G_0)$. Let $w'$ be any other point in $Y$. 
Since $(G_w)$ is the principal orbit type for the action on $Y$, there
exists a $g_0\in G$ such that $g_oG_w g_0^{-1} \subset G_{w'}$. Hence,
we can assume that $G_w \subset G_{w'}$. Let $\iota: G_w \rightarrow
G_{w'}$ be the inclusion. Since $G_w \subset G_{w'}$, we also have
$H_w \subset H_{w'}$. Together with assumption \ref{eins}.) and Lemma
\ref{subgroups}, this implies that $H_w = H_{w'}$. If we let
$H_w=H_{w'}$ act on $G_w$ and $G_{w'}$ as a normal
subgroup, $\iota$ is clearly equivariant. This shows that $\iota$ induces an
injective group homomorphism $\tilde{\iota}: G_w/H_w \rightarrow
G_{w'}/H_w$. For every $y
\in Y$ we have the following exact
sequence
\[1 \rightarrow H_y \rightarrow G_y \rightarrow G_yH/H \rightarrow
1.\] Together with assumption \ref{zwei}.), this implies that $G_w/H_w$ and $G_{w'}/H_w$ are
isomorphic as topological groups. Since both $G_w/H_w$ and $G_{w'}/H_w$ have a finite
number of connected components, Lemma
\ref{subgroups} implies that $\tilde{\iota}$ is surjective. It follows
that $\iota$ is
surjective and that $G_{w'}$ is equal to $G_w$. This shows that the $2$-step stratification is finer than the
$G$-stratification of $X\hq G$ and we are done.
\end{proof}
\section{Projectivity and K\"ahlerian reduction}
One source of examples of quotients for actions of complex-reductive Lie groups is Geometric Invariant Theory (GIT). In this section we will explain the construction of GIT-quotients in detail, noting that it is an example of the K\"ahlerian reduction in steps procedure yielding projective algebraic quotient spaces. In addition we will discuss projectivity results for general Hamiltonian actions on complex algebraic varieties and their compatibility with reduction in steps.

A complex-reductive group $G=K^\C$ carries a uniquely determined algebraic structure making it into a
linear algebraic group. In this section we study algebraic actions of complex-reductive groups $G$ with respect to this algebraic structure. For this, we define a \emph{complex algebraic $G$-variety $X$} to be a complex algebraic variety $X$ together with an action of $G$ such that the action map $G\times X \to X$ is algebraic. 

Let $X$ be a projective complex algebraic $G$-variety. Let $L$ be a very ample $G$-linearised line
bundle on $X$, i.e. a very ample line bundle $L$ on $X$ and an action of $G$ on $L$ by bundle
automorphisms making the bundle projection $G$-equivariant. The action of $G$ on $X$ and on $L$ induces a natural representation on $V\definiere \Gamma(X, L)^*$ and there exists a $G$-equivariant embedding of $X$ into $\P(V)$. Let
\[\mathcal{N}(V) = \{v \in V \, | \, f(v)= 0 \text{ for all } f \in
\C[V]^G\}\]be the \emph{nullcone} of $V$. Here $\C[V]^G$ denotes the algebra of polynomials on $V$ that are invariant under the action of $G$. Let $X(V) \definiere X \setminus p(\mathcal{N}(V))$, where $p:
V\setminus \{0\} \rightarrow \P(V)$ denotes the projection. Then, it
is proven in Geometric Invariant Theory that the
analytic Hilbert quotient $\pi_G: X(V) \to X(V)\hq G$ exists, that
$X(V)\hq G$ is a projective
algebraic variety, that the quotient map is algebraic and affine, and that the
algebraic structure sheaf on the quotient is the sheaf of invariant
polynomials on $X(V)$ (see \cite{MumfordGIT}). In this situation, we call $\pi$ an \emph{algebraic Hilbert quotient}. We will explain the relation of this algebraic
quotient theory to reduction in steps. Consider the action of the complex-reductive group
$G \times \C^*$ on $V$ that is given by the $G$-representation and
the action of $\C^*$ by multiplication. By a theorem of
Hilbert, the analytic Hilbert quotient $V\hq G$ exists as an affine
algebraic variety. It can be embedded into a $\C^*$-representation
space $W$ with $\C[W]^{\C^*} = \C$ as a $\C^*$-invariant algebraic subvariety. The orbit space
$W\setminus\{0\}/\C^*$ is a weighted projective space. It
follows that the analytic Hilbert quotient of $V\setminus
\mathcal{N}(V)$ by the action of $G \times \C^*$ exists as a projective algebraic variety. Let $C(X)\definiere \overline{\pi^{-1}(X)}\subset V$ be
the \emph{cone over $X$}. The considerations above imply that the
analytic Hilbert quotient of $C(X) \setminus \mathcal{N}(V)$ by the
action of $G\times \C^*$ exists as a projective algebraic variety. Now we note that $X(V)=
(C(X)\setminus  \mathcal{N}(V))/\C^*$, i.e. $X(V)$ is the quotient of
$ C(X)\setminus  \mathcal{N}(V)$ by the normal subgroup $\C^*$ of $G
\times \C^*$. By Theorem \ref{normalquotient}, the analytic Hilbert quotient $\pi_G: X(V) \to X(V)\hq G$
exists and the following diagram commutes:
\[\begin{xymatrix}{
C(X)\setminus  \mathcal{N}(V) \ar[r]\ar[d]^{p}&  X(V)\hq G\\
X(V)\ar[ru]_{\pi_G}&.
}
\end{xymatrix}
\]
The variety $C(X)$ is K\"ahler with K\"ahler structure given by the square of the norm function associated to a $K$-invariant Hermitean product $<\cdot, \cdot>$ on $V$ . Furthermore, the action of $K\times S^1$ is
Hamiltonian. A momentum map is given by
\[\mu^{(\xi,\sqrt{-1})}(v) = 2i<\xi v, v>  +\,\|v\|^2 - 1 \quad \forall \xi \in \mathfrak{k}.\]Here, $\sqrt{-1}$ is considered as an element of $\Lie (S^1)$. The set of semistable points coincides with
$C(X)\setminus \mathcal{N}(V)$. The set of semistable points for the $\C^*$-action and the restricted momentum map is $V\setminus \{0\}$. The Fubini-Study-metric on $X$ is obtained by K\"ahlerian reduction
for the quotient $p: C(X)\setminus\{0\} \rightarrow
X$ and the $K$-action on $X$ is Hamiltonian with respect to the Fubini-Study-metric with momentum map $\tilde{\mu}^\xi ([v]) = \frac{2i <\xi v,v>}{\|v\|^2}$. The set of semistable points with respect to $\tilde{\mu}$ coincides with $X(V)$.

We will now take a closer look at the K\"ahlerian structure that we get by
K\"ahlerian reduction of the Fubini-Study-metric to $X(V)\hq
G \simeq \tilde{\mu}^{-1}(0)/K$. We have already noted that $X(V)\hq G$ is projective algebraic. It carries an ample
line bundle $L_{red}$ such that there exists an $n_0 \in \N$ with $\pi_G^*(L_{red}) = H^{n_0}$, where $H$ denotes the restriction
of the hyperplane bundle to $X(V)$. With this notation, the reduced K\"ahler structure $\omega_{red}$ on
$X(V)\hq G$ fulfills 
\[n_0 \cdot c_1(\omega_{red}) = c_1(L_{red}) \in H^2(X,\R), \]where $c_1$ denotes
the first Chern class of $\omega_{red}$ and of
$L_{red}$, respectively. Hence, the cohomology class of the K\"ahler structure obtained by K\"ahlerian reduction of the Fubini-Study-metric lies in the real span of the ample cone of $X(V)\hq G$.

The discussion above shows that analytic Hilbert quotients obtained by  K\"ahlerian reduction of projective algebraic varieties are again projective algebraic if the $K$-actions under consideration are Hamiltonian with respect to the Fubini-Study-metric. For arbitrary K\"ahler forms on an algebraic variety, i.e.\ forms that are not the curvature form of a very ample line bundle, and  associated momentum maps there is no obvious relation to ample $G$-line bundles and the corresponding analytic Hilbert quotients of sets of semistable points are not a priori algebraic. 

The best algebraicity result for momentum map quotients known so far is proven in \cite{Doktorarbeit}:

\textit{Let $K$ be a compact Lie group and $G = K^\C$. Let $X$ be a smooth complex algebraic $G$-variety such that the
$K$-action on $X$ is Hamiltonian with respect to a $K$-invariant
K\"ahler form on $X$ with momentum map $\mu: X \to \mathfrak{k}^*$. Let $\mathcal{M} \definiere \mu^{-1}(0)$, and let $\pi_G: X(\mathcal{M}) \to X(\mathcal{M})\hq G$ denote the quotient map. If $\mathcal{M}$ is
compact, the following holds:
\begin{enumerate}
 \item The analytic Hilbert quotient $X(\mathcal{M})\hq G$ is (the complex space associated to) a projective
algebraic variety.
\item There exists a $G$-equivariant
biholomorphic map $\Phi$ from $X(\mathcal{M})$ to an algebraic $G$-variety
$Y$, there exists an algebraic Hilbert
quotient $p_G: Y \to Y\hq G$, and the map $\bar \Phi: X(\mathcal{M})\hq G \to Y\hq G$ that is induced by $\Phi$ is an isomorphism of algebraic varieties.
\item The algebraic $G$-variety $Y$ is uniquely determined up to $G$-equivariant isomorphism of algebraic varieties.
\end{enumerate}}

As before, let $L \vartriangleleft K$ be a
normal closed subgroup of $K$ and $H = L^\C$. Then, reduction in steps is compatible with the algebraicity results obtained above in the following way:

We have already seen that the analytic Hilbert quotient $\pi_H: X(\mathcal{M}) \to X(\mathcal{M})\hq H$ exists. Furthermore, there exists an algebraic Hilbert
quotient $p_H: Y \rightarrow Y \hq H$ for the action of $H$ on $Y$ and the map $\Phi$ induces a $G$-equivariant biholomorphic map $\Phi_H: X(\mathcal{M})\hq H \to Y\hq H$. In addition, the algebraic Hilbert quotient $(Y \hq H) \hq G$ exists and it is biregular to $Y\hq G$. If $\bar p: Y\hq H \to Y\hq G$ denotes the quotient map, the following diagram commutes: 
\begin{align*}
\begin{xymatrix}{
X(\mathcal{M})\hq H \ar[rd]^{\bar\pi}\ar@/_3pc/[ddd]^{\Phi_H}                      &					\\
X(\mathcal{M}) \ar[d]^\Phi \ar[r]^{\pi_G} \ar[u]_{\pi_H} & X(\mathcal{M})\hq G \ar[d]^{\bar \Phi} \\
 Y \ar[r]^{p_G}\ar[d]^{p_H}  & Y\hq G \\
  Y\hq H  \ar[ur]_{\bar p}&.
}
\end{xymatrix}
\end{align*}

\vspace{3,5cm}
\textbf{Authors' adress:
}\\
Institut und Fakult\"at f\"ur Mathematik\\
Ruhr-Universit\"at Bochum\\
Universit\"atsstrasse 150\\
44780 Bochum\\
Germany

\textbf{e-mail:}\\
daniel.greb@ruhr-uni-bochum.de\\
heinzner@cplx.ruhr-uni-bochum.de
\end{document}